\long\def\symbolfootnote[#1]#2{\begingroup\def\thefootnote{\fnsymbol{footnote}}
\footnote[#1]{#2}\endgroup}
\newtheorem{thm}{Theorem}[section]
\newtheorem{cor}[thm]{Corollary}
\newtheorem{lem}[thm]{Lemma}
\newtheorem{prop}[thm]{Proposition}
\theoremstyle{definition}
\newtheorem{defn}[thm]{Definition}
\theoremstyle{remark}
\newtheorem{rem}[thm]{Remark}
\begin{document}

\title[Submanifolds in manifolds with metric mixed 3-structures]{Submanifolds in manifolds with metric mixed 3-structures}

\author[S. Ianu\c{s}, L. Ornea, G.E. V\^{\i}lcu]{Stere Ianu\c{s}$^*$, Liviu Ornea, Gabriel Eduard V\^{\i}lcu}\thanks{$^*$Passed away on April 8, 2010}

\date{}
\maketitle

\abstract Mixed 3-structures are odd-dimensional analogues of
paraquaternionic structures. They appear naturally on lightlike
hypersurfaces of almost paraquaternionic hermitian manifolds. We
study invariant and anti-invariant submanifolds in a manifold
endowed with a mixed 3-structure and a compatible (semi-Riemannian)
metric. Particular attention is given to two cases of ambient space:
mixed 3-Sasakian and mixed 3-cosymplectic. \\[1mm]
{\em AMS Mathematics Subject Classification:} 53C15, 53C50, 53C40, 53C12.\\
{\em Key Words and Phrases:} invariant submanifold, anti-invariant
submanifold, mixed 3-structure, Einstein manifold.

\endabstract

\section{Introduction}

 The counterpart in odd dimension of a paraquaternionic
structure was introduced in \cite{IMV}. It is called mixed
3-structure, and appears in a natural way on lightlike hypersurfaces
in almost paraquaternionic hermitian manifolds. Such hypersurfaces
inherit two almost paracontact structures and an almost contact
structure, satisfying analogous conditions to those satisfied by
almost contact 3-structures \cite{KUO}. This concept has been
refined in \cite{CP}, where the authors have introduced positive and
negative  metric mixed 3-structures. The differential geometry of
the semi-Riemannian hypersurfaces of co-index both 0 and 1 in such
manifolds has been recently investigated in \cite{IV2}. In the
present paper, we discuss non-degenerate invariant and
anti-invariant submanifolds in manifolds endowed with metric mixed
3-structures, the relevant ambients being mixed 3-Sasakian and mixed
3-cosymplectic.

The paper is organized as follows. In Section 2 we recall
definitions and basic properties of manifolds with metric mixed
3-structures. In Section 3 we establish several results concerning
the existence of invariant and anti-invariant submanifolds in a
manifold endowed with a metric mixed 3-structure, tangent or normal
to the structure vector fields. Particularly, we show that an
invariant submanifold is either tangent or normal to \emph{all} the
three structure vector fields. Moreover, we prove that a totally
umbilical submanifold of a mixed 3-Sasakian manifold, tangent to the
structure vector fields, is invariant and totally geodesic. This
section ends with a wide range of examples. In Section 4 we study
the anti-invariant submanifolds in a manifold endowed with a mixed
3-cosymplectic or mixed 3-Sasakian structure, normal to the
structure vector fields. In particular, necessary and sufficient
conditions are provided for the connection in the normal bundle to
be trivial. We also provide an example of an anti-invariant flat
minimal submanifold of $S^{4n+3}_{2n+1}$, normal to the structure
vector fields. Section 5 discusses the distributions which naturally
appear on invariant submanifolds of manifolds endowed with metric
mixed 3-structures, tangent to the structure vector fields.
Moreover, we obtain that a non-degenerate submanifold of a mixed
3-Sasakian manifold tangent to the structure vector fields is
totally geodesic if and only if it is invariant. In the last Section
we investigate the geometry of invariant submanifolds of mixed
3-cosymplectic manifolds, normal to the structure vector fields and
prove that such a submanifold admits a para-hyper-K\"{a}hler
structure.

\section{Preliminaries}

An almost product structure on a smooth manifold $\overline{M}$ is a
tensor field $P$ of type (1,1) on $\overline{M}$, $P\neq\pm Id$,
such that
\[
P^2=Id.
\]
where $Id$ is the identity tensor field of type (1,1) on
$\overline{M}$.

An almost complex structure on a smooth manifold $\overline{M}$ is a
tensor field $J$ of type (1,1) on $\overline{M}$ such that
\[
J^2=-Id.
\]

An almost para-hypercomplex structure on a smooth manifold
$\overline{M}$ is a triple $H=(J_{\alpha})_{\alpha=\overline{1,3}}$,
where $J_1$, $J_2$ are almost product structures on $\overline{M}$
and $J_3$ is an almost complex structure on $\overline{M}$,
satisfying:
\[
         J_1J_2=-J_2J_1=J_3.
\]

A semi-Riemannian metric $\overline{g}$ on $(\overline{M},H)$ is
said to be compatible or adapted to the almost para-hypercomplex
structure $H=(J_{\alpha})_{\alpha=\overline{1,3}}$ if it satisfies:
\[
         \overline{g}(J_1X,J_1 Y)=\overline{g}(J_2X,J_2 Y)=-\overline{g}(J_3X,J_3Y)=-\overline{g}(X,Y)
\]
for all vector fields $X$,$Y$ on $\overline{M}$. Moreover, the
triple $(\overline{M},\overline{g},H)$ is said to be an almost
para-hyperhermitian manifold. If $\lbrace{J_1,J_2,J_3}\rbrace$ are
parallel with respect to the Levi-Civita connection of $\overline{g}$,
then the manifold is called para-hyper-K\"{a}hler. Note that, given a para-hypercomplex structure,
compatible metrics might not exist at all, at least in real dimension $4$, as recently shown in
\cite{DGMY}, using an Inoue surface.

An almost hermitian paraquaternionic manifold is a triple
$(\overline{M},\sigma,\overline{g})$, where $\overline{M}$ is a
smooth manifold, $\sigma$ is a rank 3-subbundle of
$End(T\overline{M})$ which is locally spanned by an almost
para-hypercomplex structure $H=(J_{\alpha})_{\alpha=\overline{1,3}}$
and $\overline{g}$ is a compatible metric with respect to $H$.
Moreover, if the bundle $\sigma$ is preserved by the
Levi-Civita connection of $\overline{g}$, then
$(\overline{M},\sigma,\overline{g})$ is said to be a
paraquaternionic K\"{a}hler manifold \cite{GRM}. The prototype of
paraquaternionic K\"{a}hler manifold is the paraquaternionic
projective space $P^n(\mathbb{B})$ as described by Bla\v{z}i\'{c}
\cite{BLZ}.

A submanifold $M$ of a quaternionic K\"{a}hler manifold
$\overline{M}$ is called  quaternionic (respectively
totally real) if each tangent space of $M$ is carried
into itself (respectively into its orthogonal complement) by each section of
$\sigma$. Several examples of paraquaternionic and totally real
submanifolds of $P^n(\mathbb{B})$ are given in \cite{IMAV,MAR}.

\begin{defn}
        Let $\overline{M}$ be a differentiable manifold equipped with a triple
        $(\varphi,\xi,\eta)$, where $\varphi$ is a field  of endomorphisms
        of the tangent spaces, $\xi$ is a vector field and $\eta$ is a
        1-form on $\overline{M}$. If we have:
\begin{equation}\label{1}
        \varphi^2=\tau(-I+\eta\otimes\xi),\ \ \  \eta(\xi)=1
\end{equation}
        then we say that:

        $(i)$ $(\varphi,\xi,\eta)$ is an almost contact
        structure on $\overline{M}$, if $\tau=1$ (\cite{SAS}).

        $(ii)$ $(\varphi,\xi,\eta)$ is an almost paracontact
        structure on $\overline{M}$, if $\tau=-1$ (\cite{SAT}).
\end{defn}

We remark that many authors also include in the above definition the
conditions that
\begin{equation}\label{2}
\varphi\xi=0,\ \eta\circ\varphi=0,
\end{equation}
although these are deducible from (\ref{1}) (see \cite{BLR}).

\begin{defn}
A mixed 3-structure on a smooth manifold $\overline{M}$ is a triple
of structures $(\varphi_\alpha,\xi_\alpha,\eta_\alpha)$,
$\alpha\in\{1,2,3\}$, which are almost paracontact structures for
$\alpha=1,2$ and almost contact structure for $\alpha=3$, satisfying
the following conditions:
\begin{equation}\label{3}
\eta_\alpha(\xi_\beta)=0,
\end{equation}
\begin{equation}\label{4}
\varphi_\alpha(\xi_\beta)=\tau_\beta\xi_\gamma,\ \ \varphi_\beta(\xi_\alpha)=-\tau_\alpha\xi_\gamma,\\
\end{equation}
\begin{equation}\label{5}
\eta_\alpha\circ\varphi_\beta=-\eta_\beta\circ\varphi_\alpha=
\tau_\gamma\eta_\gamma\,,\\
\end{equation}
\begin{equation}\label{6}
\varphi_\alpha\varphi_\beta-\tau_\alpha\eta_\beta\otimes\xi_\alpha=
-\varphi_\beta\varphi_\alpha+\tau_\beta\eta_\alpha\otimes\xi_\beta=
\tau_\gamma\varphi_\gamma\,,
\end{equation}
where $(\alpha,\beta,\gamma)$ is an even permutation of $(1,2,3)$
and $\tau_1=\tau_2=-\tau_3=-1$.

Moreover, if a manifold $\overline{M}$ with a mixed 3-structure
$(\varphi_\alpha,\xi_\alpha,\eta_\alpha)_{\alpha=\overline{1,3}}$
 admits a semi-Riemannian metric $\overline{g}$ such that:
\begin{equation}\label{7}
\overline{g}(\varphi_\alpha X, \varphi_\alpha Y)=\tau_\alpha
[\overline{g}(X,Y)-\varepsilon_\alpha\eta_\alpha(X)\eta_\alpha(Y)],
\end{equation}
for all $X,Y\in\Gamma(T\overline{M})$ and $\alpha=1,2,3$, where
$\varepsilon_\alpha=\overline{g}(\xi_\alpha,\xi_\alpha)=\pm1$, then we
say that $\overline{M}$ has a metric mixed 3-structure and
$\overline{g}$ is called a compatible metric.
\end{defn}

From (\ref{7}) we obtain
\begin{equation}\label{8}
\eta_\alpha(X)=\varepsilon_\alpha\overline{g}(X,\xi_\alpha),\
\overline{g}(\varphi_\alpha X, Y)=- \overline{g}(X,\varphi_\alpha Y)
\end{equation}
for all $X,Y\in\Gamma(T\overline{M})$ and $\alpha=1,2,3$.

Note that if
$(\overline{M},(\varphi_\alpha,\xi_\alpha,\eta_\alpha)_{\alpha=\overline{1,3}},\overline{g})$
is a manifold with a metric mixed 3-structure then from (\ref{8}) it
follows
\[
\overline{g}(\xi_1,\xi_1)=\overline{g}(\xi_2,\xi_2)=-\overline{g}(\xi_3,\xi_3).
\]

Hence the vector fields $\xi_1$ and $\xi_2$ are both either
space-like or time-like and these force the causal character of the
third vector field $\xi_3$. We may therefore distinguish between
positive and negative metric mixed 3-structures, according as
$\xi_1$ and $\xi_2$ are both space-like, or both time-like vector
fields. Because one can check that, at each point of $\overline{M}$,
 there always exists a pseudo-orthonormal frame field given by
$\{(E_i,\varphi_1 E_i, \varphi_2 E_i, \varphi_3
E_i)_{i=\overline{1,n}}\,, \xi_1, \xi_2, \xi_3\}$ we conclude that
the dimension of the manifold is $4n+3$ and the signature of
$\overline{g}$ is $(2n+1,2n+2)$, where we put first the minus signs,
if the metric mixed 3-structure is positive (\emph{i.e.}
$\varepsilon_1=\varepsilon_2=-\varepsilon_3=1$), or the signature of
$\overline{g}$ is $(2n+2,2n+1)$, if the metric mixed 3-structure is
negative (\emph{i.e.}
$\varepsilon_1=\varepsilon_2=-\varepsilon_3=-1$).

\begin{rem} For the time
being, it is not known wether a mixed 3-structure always admits both
positive and negative compatible semi-Riemannian metrics or not. The
cited result in \cite{DGMY} suggests a negative answer, but we do
not have a proof.
\end{rem}

\begin{defn}
Let
$(\overline{M},(\varphi_\alpha,\xi_\alpha,\eta_\alpha)_{\alpha=\overline{1,3}},\overline{g})$
be a manifold with a metric mixed 3-structure.

$(i)$ If  $(\varphi_1,\xi_1,\eta_1,\overline{g})$,
$(\varphi_2,\xi_2,\eta_2,\overline{g})$ are para-cosymplectic
structures and $(\varphi_3,\xi_3,\eta_3,\overline{g})$ is a
cosymplectic structure, \emph{i.e.} the Levi-Civita connection
$\overline{\nabla}$ of $\overline{g}$ satisfies
\begin{equation}\label{9}
\overline{\nabla}\varphi_\alpha=0
\end{equation}
for all $\alpha\in\{1,2,3\}$, then
$((\varphi_\alpha,\xi_\alpha,\eta_\alpha)_{\alpha=\overline{1,3}},\overline{g})$
is said to be a mixed 3-cosymplectic structure on $\overline{M}$.

$(ii)$ If $(\varphi_1,\xi_1,\eta_1,\overline{g})$,
$(\varphi_2,\xi_2,\eta_2,\overline{g})$ are para-Sasakian structures
and $(\varphi_3,\xi_3,\eta_3,\overline{g})$ is a Sasakian structure,
\emph{i.e.}
\begin{equation}\label{10}
(\overline{\nabla}_X\varphi_\alpha)
Y=\tau_\alpha[g(X,Y)\xi_\alpha-\varepsilon_\alpha\eta_\alpha(Y)X]
\end{equation}
for all $X,Y\in\Gamma(T\overline{M})$ and $\alpha\in\{1,2,3\}$, then
$((\varphi_\alpha,\xi_\alpha,\eta_\alpha)_{\alpha=\overline{1,3}},\overline{g})$
is said to be a mixed 3-Sasakian structure on $\overline{M}$.
\end{defn}

Note that from (\ref{9}) it follows:
\begin{equation}\label{11}
\overline{\nabla}\xi_\alpha=0,\, (\text{and hence}\, \overline{\nabla}\eta_\alpha=0),
\end{equation}
and from (\ref{10}) we obtain
\begin{equation}\label{12}
\overline{\nabla}_X\xi_\alpha=-\varepsilon_\alpha\varphi_\alpha X,
\end{equation}
for all $\alpha\in\{1,2,3\}$ and $X\in\Gamma(T\overline{M})$.

Like their Riemannian counterparts,
mixed 3-Sasakian structures are Einstein, but now the scalar curvature can be either positive or negative (see
\cite{CP,IV}):

\begin{thm} Any $(4n+3)-$dimensional manifold endowed with a
mixed $3$-Sasakian structure is an Einstein space with Einstein
constant $\lambda=(4n+2)\varepsilon$, with $\varepsilon=\mp1$,
according as the metric mixed 3-structure is positive or negative,
respectively.
\end{thm}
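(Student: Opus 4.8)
The plan is to compute the Ricci curvature directly from the Sasakian-type structure equation (\ref{10}), which fully determines the covariant derivatives of the $\varphi_\alpha$ and, via (\ref{12}), the derivatives of the structure vector fields. The strategy is to first extract the curvature operator along the structure directions, then use the three mutually interacting structures to pin down the Ricci tensor on the whole tangent space via the pseudo-orthonormal frame $\{(E_i,\varphi_1 E_i,\varphi_2 E_i,\varphi_3 E_i)_i,\xi_1,\xi_2,\xi_3\}$ exhibited in the preliminaries.

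First I would differentiate (\ref{12}) to get the curvature in the $\xi_\alpha$ directions. Writing $\overline{R}(X,Y)\xi_\alpha = \overline{\nabla}_X\overline{\nabla}_Y\xi_\alpha - \overline{\nabla}_Y\overline{\nabla}_X\xi_\alpha - \overline{\nabla}_{[X,Y]}\xi_\alpha$ and substituting $\overline{\nabla}_Y\xi_\alpha = -\varepsilon_\alpha\varphi_\alpha Y$, each term expands through $(\overline{\nabla}_X\varphi_\alpha)Y$, which is given explicitly by (\ref{10}). This yields a clean closed-form expression for $\overline{R}(X,Y)\xi_\alpha$ in terms of $\eta_\alpha$, $\xi_\alpha$, $\varphi_\alpha$ and the metric; in particular, taking a trace over $X$ (or specializing $Y=\xi_\alpha$) produces $\overline{R}(X,\xi_\alpha)\xi_\alpha$ and hence the sectional-type contributions $\overline{\mathrm{Ric}}(\xi_\alpha,\xi_\alpha)$. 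Summing the contributions of the three structures and using the algebraic identities (\ref{3})--(\ref{6}) to collapse the cross terms $\varphi_\alpha\varphi_\beta$ is where the mixed signs $\tau_\alpha$ and $\varepsilon_\alpha$ must be tracked carefully.

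Next I would compute $\overline{\mathrm{Ric}}(X,Y)$ for $X,Y$ in the distribution orthogonal to the $\xi_\alpha$ by tracing $\overline{R}(\,\cdot\,,X)Y$ against the pseudo-orthonormal frame. The key simplification is that the three structures act transitively enough on the $4n$-dimensional complementary distribution — via (\ref{4}) and (\ref{6}) the operators $\varphi_1,\varphi_2,\varphi_3$ permute the frame vectors $E_i,\varphi_1E_i,\varphi_2E_i,\varphi_3E_i$ up to the signs $\tau_\alpha$ — so that the curvature terms reorganize into a multiple of $\overline{g}(X,Y)$ minus correction terms along the $\eta_\alpha\otimes\eta_\alpha$ directions, exactly matching $\lambda\,\overline{g}(X,Y)$. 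Combining the two computations should give $\overline{\mathrm{Ric}} = \lambda\,\overline{g}$ with $\lambda = (4n+2)\varepsilon$, the factor $4n+2$ arising as the effective count of frame directions contributing to each trace and $\varepsilon=\mp1$ from $\varepsilon_1=\varepsilon_2=-\varepsilon_3$.

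The main obstacle I expect is bookkeeping rather than conceptual: the interplay of the two distinct sign conventions $\tau_\alpha$ (paracontact versus contact) and $\varepsilon_\alpha$ (causal character) means every contraction of a $\varphi_\alpha\varphi_\beta$ term can flip sign, and one must verify that the positive and negative structures yield the advertised $\varepsilon=-1$ and $\varepsilon=+1$ respectively without an off-by-a-sign error. A secondary subtlety is ensuring the trace over the indefinite metric is taken with the correct $\varepsilon_i$ weights on each frame vector, since the frame is pseudo-orthonormal rather than orthonormal; getting these weights right is essential for the constant $4n+2$ to emerge cleanly. Since the analogous Riemannian 3-Sasakian computation is classical, I would model the argument on that case and let the mixed signs propagate, checking consistency against the known result that the sign of $\lambda$ is governed solely by the positivity or negativity of the metric mixed 3-structure.
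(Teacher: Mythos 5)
The paper does not actually prove this theorem: it is quoted from the references \cite{CP,IV} (``see \cite{CP,IV}''), so there is no internal proof to compare against. Your outline follows the standard route of those sources, which in turn mirrors Kashiwada's classical argument for Riemannian $3$-Sasakian manifolds, so the strategy is the right one. Your first step is sound and can be made completely precise: differentiating (\ref{12}) and substituting (\ref{10}) gives $\overline{R}(X,Y)\xi_\alpha=\tau_\alpha[\eta_\alpha(Y)X-\eta_\alpha(X)Y]$, whence $\overline{\mathrm{Ric}}(\xi_\alpha,\xi_\alpha)=(4n+2)\tau_\alpha$ and, since $\overline{g}(\xi_\alpha,\xi_\alpha)=\varepsilon_\alpha$ and $\tau_\alpha\varepsilon_\alpha$ is $-1$ for a positive and $+1$ for a negative structure uniformly in $\alpha$, the Einstein constant is forced to be $\lambda=\tau_\alpha\varepsilon_\alpha(4n+2)=(4n+2)\varepsilon$. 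This part only pins down the value of $\lambda$ \emph{assuming} the manifold is Einstein.

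The genuine gap is in your second step, which is where all the content of the theorem lies. The assertion that the three structures ``act transitively enough on the $4n$-dimensional complementary distribution so that the curvature terms reorganize into a multiple of $\overline{g}(X,Y)$'' is not an argument: a single (para-)Sasakian structure also permutes an adapted frame, yet a Sasakian manifold need not be Einstein, so transitivity of the frame action cannot by itself produce the conclusion. The missing ingredient is the Ricci identity applied to the parallel-type condition (\ref{10}): computing $(\overline{\nabla}^2_{X,Y}\varphi_\alpha)Z-(\overline{\nabla}^2_{Y,X}\varphi_\alpha)Z$ from (\ref{10}) and (\ref{12}) yields an explicit formula for $\overline{R}(X,Y)\varphi_\alpha Z-\varphi_\alpha\overline{R}(X,Y)Z$ in terms of $\overline{g}$, $\eta_\alpha$, $\xi_\alpha$ and $\varphi_\alpha$. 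One must then contract this identity over the pseudo-orthonormal frame (with the correct causal weights, as you note) for each of $\alpha=1,2,3$ \emph{separately} and combine the three resulting relations, using (\ref{3})--(\ref{6}) to cancel the unknown mixed traces of the form $\sum_i\varepsilon_i\,\overline{g}(\overline{R}(E_i,X)\varphi_\alpha Y,\varphi_\alpha E_i)$; it is only this interplay among the three structures that eliminates the free curvature terms and leaves $\overline{\mathrm{Ric}}=\lambda\,\overline{g}$. As written, your proposal gestures at this step without performing it, so it is an accurate plan rather than a proof; to complete it you should either carry out the contraction argument explicitly or cite \cite{CP} (Caldarella--Pastore) where it is done for the mixed signature case.
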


Several examples of manifolds endowed with metric mixed 3-structures
are given in \cite{IVV,IV2}: $\mathbb{R}^{4n+3}_{2n+1}$ admits a
positive mixed 3-cosymplectic structure, $\mathbb{R}^{4n+3}_{2n+2}$
admits a negative mixed 3-cosymplectic structure, the unit
pseudo-sphere $S^{4n+3}_{2n+1}$ and the real projective space
$P^{4n+3}_{2n+1}(\mathbb{R})$ are the canonical examples of
manifolds with positive mixed 3-Sasakian structures, while the unit
pseudo-sphere $S^{4n+3}_{2n+2}$ and the real projective space
$P^{4n+3}_{2n+2}(\mathbb{R})$ can be endowed with negative mixed
3-Sasakian structures.\\

Let $(\overline{M},\overline{g})$ be a semi-Riemannian manifold and
let $M$ be an immersed submanifold of $\overline{M}$. Then $M$ is
said to be \emph{non-degenerate} if the
restriction of the semi-Riemannian metric $\overline{g}$ to $TM$ is
non-degenerate at each point of $M$. We denote by $g$ the
semi-Riemannian metric induced by $\overline{g}$ on $M$ and by
$TM^\perp$ the normal bundle to $M$. Then we have the following
orthogonal decomposition:
$$T\overline{M}=TM\oplus TM^\perp.$$

Also, we denote by $\overline{\nabla}$ and $\nabla$ the Levi-Civita
connection on $\overline{M}$ and $M$, respectively. Then the Gauss
formula is given by:
\begin{equation}\label{13}
\overline{\nabla}_X Y=\nabla_X Y+h(X,Y)
\end{equation}
for all $X,Y\in\Gamma(TM)$, where
$h:\Gamma(TM)\times\Gamma(TM)\rightarrow\Gamma(TM^\perp)$ is the
second fundamental form of $M$ in $\overline{M}$.

On the other hand, the Weingarten formula is given by:
\begin{equation}\label{14}
\overline{\nabla}_X N=-A_N X+\nabla^\perp_XN
\end{equation}
for any $X\in\Gamma(TM)$ and $N\in\Gamma(TM^\perp)$, where $-A_N X$
is the tangential part of $\overline{\nabla}_X N$ and
$\nabla^\perp_XN$ is the normal part of $\overline{\nabla}_X N$;
$A_N$ and $\nabla^\perp$ are called the shape operator of $M$ with
respect to $N$ and the normal connection, respectively. Moreover,
$h$ and $A_N$ are related by:
\begin{equation}\label{15}
\overline{g}(h(X,Y),N)=g(A_N X,Y)
\end{equation}
for all $X,Y\in\Gamma(TM)$ and $N\in\Gamma(TM^\perp)$.

For the rest of this paper we shall assume that the induced metric is
non-degenerate.

\section{Basic results}

\begin{defn}
A non-degenerate submanifold $M$ of a manifold $\overline{M}$
endowed with a metric mixed 3-structure
$((\varphi_\alpha,\xi_\alpha,\eta_\alpha)_{\alpha=\overline{1,3}},\overline{g})$
is said to be:

$(i)$ \emph{invariant} if $\varphi_\alpha(T_pM)\subset T_pM$, for
all $p\in M$ and $\alpha=1,2,3$;

$(ii)$ \emph{anti-invariant} if $\varphi_\alpha(T_pM)\subset
T_pM^\perp$, for all $p\in M$ and $\alpha=1,2,3$.
\end{defn}

\begin{lem}\label{3.2}
Manifolds with metric mixed 3-structure do not admit anti-invariant
submanifolds tangent to the structure vector fields
$\xi_1,\xi_2,\xi_3$.
\end{lem}
\begin{proof}
If we suppose that $M$ is an anti-invariant submanifold of the
manifold $\overline{M}$ endowed with a metric mixed 3-structure
$((\varphi_\alpha,\xi_\alpha,\eta_\alpha)_{\alpha=\overline{1,3}},\overline{g})$,
tangent to the structure vector fields, then it follows
\[\varphi_\alpha(\xi_\beta)\in T_pM^\perp,\ \alpha\neq\beta.\]
On the other hand, we have from (\ref{4}) that
\[\varphi_\alpha(\xi_\beta)=\tau_\beta\xi_\gamma\in T_pM,\]
for any even permutation $(\alpha,\beta,\gamma)$ of $(1,2,3)$. So
\[\xi_\gamma\in T_pM\cap T_pM^\perp=\{0\},\]
which is a contradiction.
\end{proof}

On the contrary, in mixed 3-Sasakian ambient, a submanifold normal to the structure fields is forced to be anti-invariant:

\begin{lem}\label{3.3}
Let $M$ be a non-degenerate $m$-dimensional submanifold of a
$(4n+3)$-dimensional mixed 3-Sasakian manifold
$((\overline{M},\varphi_\alpha,\xi_\alpha,\eta_\alpha)_{\alpha=\overline{1,3}},\overline{g})$.
If the structure vector fields are normal to $M$, then $M$ is
anti-invariant and $m\leq n$.
\end{lem}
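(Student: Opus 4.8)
The plan is to prove anti-invariance first and then deduce the dimension bound by a counting argument inside the normal space. Throughout, the crucial consequence of the hypothesis is that since each $\xi_\alpha$ is normal to $M$, formula (\ref{8}) gives $\eta_\alpha(X)=\varepsilon_\alpha\overline{g}(X,\xi_\alpha)=0$ for every $X\in\Gamma(TM)$; thus all three $1$-forms $\eta_\alpha$ vanish on $TM$, which will let me discard every $\eta$-term that appears.

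For anti-invariance, I would start from the mixed $3$-Sasakian identity (\ref{12}), namely $\overline{\nabla}_X\xi_\alpha=-\varepsilon_\alpha\varphi_\alpha X$. Since $\xi_\alpha$ is a normal field along $M$, I compare this with the Weingarten formula (\ref{14}): taking tangential parts yields $A_{\xi_\alpha}X=\varepsilon_\alpha(\varphi_\alpha X)^\top$ for $X\in\Gamma(TM)$. Now I exploit two symmetries of opposite sign. The shape operator is self-adjoint (from the symmetry of $h$ together with (\ref{15})), so $g(A_{\xi_\alpha}X,Y)=g(X,A_{\xi_\alpha}Y)$, which translates into $\overline{g}(\varphi_\alpha X,Y)=\overline{g}(X,\varphi_\alpha Y)$ for $X,Y\in\Gamma(TM)$; on the other hand $\varphi_\alpha$ is skew with respect to $\overline{g}$ by (\ref{8}), so $\overline{g}(\varphi_\alpha X,Y)=-\overline{g}(X,\varphi_\alpha Y)$. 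Adding the two relations forces $\overline{g}(\varphi_\alpha X,Y)=0$ for all tangent $X,Y$, hence $\varphi_\alpha X\in T_pM^\perp$ and $M$ is anti-invariant.

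For the inequality $m\le n$, I would show that the four subspaces $\varphi_1(T_pM)$, $\varphi_2(T_pM)$, $\varphi_3(T_pM)$ and $\mathrm{span}\{\xi_1,\xi_2,\xi_3\}$ sit in direct sum inside $T_pM^\perp$. Using (\ref{7}) with $\eta_\alpha|_{TM}=0$ gives $\overline{g}(\varphi_\alpha X,\varphi_\alpha Y)=\tau_\alpha g(X,Y)$, so each $\varphi_\alpha$ restricts to an injection of $T_pM$ onto a non-degenerate $m$-dimensional subspace of the normal space. To see these three images are mutually orthogonal I compute, for $\alpha\neq\beta$, $\overline{g}(\varphi_\alpha X,\varphi_\beta Y)=-\overline{g}(X,\varphi_\alpha\varphi_\beta Y)$ by (\ref{8}) and simplify $\varphi_\alpha\varphi_\beta Y$ using (\ref{6}); the $\eta$-term drops out and the remaining term $\tau_\gamma\varphi_\gamma Y$ is normal by anti-invariance, so the pairing vanishes. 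A similar use of (\ref{5}), (\ref{3}) and (\ref{2}) shows every $\varphi_\alpha(T_pM)$ is orthogonal to the span of the $\xi$'s, while (\ref{3}) and (\ref{8}) give that $\{\xi_1,\xi_2,\xi_3\}$ is orthonormal up to signs, hence non-degenerate. Four mutually orthogonal non-degenerate subspaces necessarily meet only in $0$, so their sum has dimension $3m+3$ and lies in $T_pM^\perp$, whose dimension is $4n+3-m$. The inequality $3m+3\le 4n+3-m$ rearranges to $m\le n$.

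The main obstacle I anticipate is not the identities themselves but the direct-sum step in the indefinite setting: orthogonality of subspaces no longer implies linear independence, since null vectors may lie in a subspace and its orthogonal complement simultaneously. The argument therefore hinges on checking that each of the four summands is non-degenerate, which is exactly where the non-degeneracy hypothesis on $M$ and the precise sign structure in (\ref{7}) are used; once non-degeneracy is in hand, a standard pairing argument recovers the direct sum and the dimension count goes through.
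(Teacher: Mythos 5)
Your proof is correct and follows essentially the same route as the paper: anti-invariance is obtained exactly as in the text, by comparing the self-adjointness of $A_{\xi_\alpha}$ (via (\ref{12}) and the Weingarten formula) with the skew-symmetry of $\varphi_\alpha$ from (\ref{8}). For the bound $m\leq n$ the paper only writes that it ``follows''; your explicit count of the mutually orthogonal non-degenerate subspaces $\varphi_\alpha(T_pM)$ and $\mathrm{span}\{\xi_1,\xi_2,\xi_3\}$ inside $T_pM^\perp$, with the (necessary, in the indefinite setting) non-degeneracy check, is precisely the decomposition the paper itself uses in Section 4, so the two arguments coincide.
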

\begin{proof}
By using (\ref{12}) and Weingarten formula, we obtain for all
$X,Y\in\Gamma(TM)$:
\[\overline g(\varphi_\alpha X,Y)=-\varepsilon_\alpha \overline g(\overline{\nabla}_X\xi_\alpha,Y)
=\varepsilon_\alpha g(A_{\xi_\alpha}X,Y)\] and similarly we find
\[\overline g(\varphi_\alpha Y,X)=\varepsilon_\alpha g(A_{\xi_\alpha}Y,X).\]

But since $A_\xi$ is a self-adjoint operator, it follows using also
(\ref{8}) that we have
\[\overline g(\varphi_\alpha X,Y)=0, \forall X,Y\in\Gamma(TM),\ \alpha=1,2,3.\]

Therefore $M$ is anti-invariant  and $m\leq n$ follows.

\end{proof}

\begin{cor}
There do not exist invariant submanifolds in mixed 3-Sasakian
manifolds normal to the structure vector fields. In particular, this is the case for the ambients: $S^{4n+3}_{2n+1}$, $S^{4n+3}_{2n+2}$,
$P^{4n+3}_{2n+1}(\mathbb{R})$ and $P^{4n+3}_{2n+2}(\mathbb{R})$.
\end{cor}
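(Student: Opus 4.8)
The plan is to argue by contradiction, letting Lemma \ref{3.3} do the real work. Suppose, contrary to the claim, that $M$ is an invariant non-degenerate submanifold of a mixed $3$-Sasakian manifold whose structure vector fields $\xi_1,\xi_2,\xi_3$ are all normal to $M$. The crucial observation is that Lemma \ref{3.3} applies verbatim under the sole hypothesis that the structure fields are normal, with no invariance assumption; hence it forces $M$ to be \emph{anti-invariant}, i.e. $\varphi_\alpha(T_pM)\subset T_pM^\perp$ for every $p\in M$ and $\alpha=1,2,3$.

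But $M$ was assumed invariant, so $\varphi_\alpha(T_pM)\subset T_pM$ as well. Combining the two inclusions with the orthogonal decomposition $T\overline{M}=TM\oplus TM^\perp$ yields $\varphi_\alpha(T_pM)\subset T_pM\cap T_pM^\perp=\{0\}$, that is, $\varphi_\alpha X=0$ for every $X\in T_pM$. I would then feed this back into the compatibility relation (\ref{7}). Since the $\xi_\alpha$ are normal to $M$, (\ref{8}) gives $\eta_\alpha(X)=\varepsilon_\alpha\overline{g}(X,\xi_\alpha)=0$ for tangent $X$, so (\ref{7}) reduces on $TM$ to $\overline{g}(\varphi_\alpha X,\varphi_\alpha Y)=\tau_\alpha\,\overline{g}(X,Y)$. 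With $\varphi_\alpha X=0$ the left-hand side vanishes and, since $\tau_\alpha=\pm1$, we conclude $g(X,Y)=0$ for all $X,Y\in T_pM$. This contradicts the non-degeneracy of the metric induced on the (positive-dimensional) submanifold $M$, which closes the argument.

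For the final assertion I would simply invoke the examples recalled earlier in Section 2: $S^{4n+3}_{2n+1}$ and $P^{4n+3}_{2n+1}(\mathbb{R})$ carry positive mixed $3$-Sasakian structures, while $S^{4n+3}_{2n+2}$ and $P^{4n+3}_{2n+2}(\mathbb{R})$ carry negative ones. Each of these is therefore a mixed $3$-Sasakian manifold, so the general statement applies to them directly.

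I expect no serious obstacle here: once Lemma \ref{3.3} is available, the corollary is essentially a formal consequence of it. The only genuine point is to rule out the degenerate possibility that $\varphi_\alpha$ could annihilate the entire tangent space (which would let $M$ be vacuously both invariant and anti-invariant), and this is exactly where the non-degeneracy of the induced metric, combined with (\ref{7}), is indispensable.
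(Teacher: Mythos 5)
Your argument is correct and follows exactly the route the paper intends: the corollary is stated without proof as an immediate consequence of Lemma \ref{3.3}, which forces any non-degenerate submanifold normal to the structure vector fields to be anti-invariant, hence not invariant. Your extra step showing that $\varphi_\alpha(T_pM)\subset T_pM\cap T_pM^\perp=\{0\}$ would contradict non-degeneracy via (\ref{7}) is a sound and welcome way of making explicit why ``invariant and anti-invariant simultaneously'' is impossible, but it is the same underlying argument.
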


\begin{rem}
Let
$(\overline{M},(\varphi_\alpha,\xi_\alpha,\eta_\alpha)_{\alpha=\overline{1,3}},\overline{g})$
be a manifold endowed with a metric mixed 3-structure and let $M$ be
an anti-invariant submanifold of $\overline{M}$, such that the
structure vector fields are not all normal to the submanifold. Hence
we have $\xi_{\alpha p}^t\neq0$, for $\alpha=1,2$ or $3$, where
$\xi_{\alpha p}^t$ denotes the tangential component of $\xi_{\alpha
p}$, $p\in M$.

We consider the subspaces $\xi_p^t\subset T_pM$, $\xi_p^n\subset
T_pM^\perp$, given by
\[\xi_p^t=Sp\{\xi_{1 p}^t,\xi_{2 p}^t,\xi_{3 p}^t\},\
\xi_p^n=Sp\{\xi_{1 p}^n,\xi_{2 p}^n,\xi_{3 p}^n\},\] where
$\xi_{\alpha p}^n$ denotes the normal component of $\xi_{\alpha p}$,
and let $Q_p$ be the orthogonal complementary subspace to $\xi_p^t$
in $T_pM$, $p\in M$. Therefore we have the decomposition
$T_pM=\xi_p^t\oplus Q_p$.

Now, we put $\mathcal{D}_{i p}=\varphi_i(Q_p)$, $i\in\{1,2,3\}$, and
note that $\mathcal{D}_{1 p},\mathcal{D}_{2 p},\mathcal{D}_{3 p}$
are mutually orthogonal non-degenerate vector subspaces of
$T_pM^\perp$. Moreover, if we let $\mathcal{D}_p=\mathcal{D}_{1
p}\oplus \mathcal{D}_{2 p}\oplus \mathcal{D}_{3 p}$ we note that $\mathcal{D}_p$ and $\xi_p^n$ are also mutually
orthogonal non-degenerate vector subspaces of $T_pM^\perp$. Letting
$\mathcal{D}_p^\perp$ be the orthogonal complementary subspace of
$\xi_p^n\oplus\mathcal{D}_p$ in $T_pM^\perp$, we have the
orthogonal decomposition
$T_pM^\perp=\xi_p^n\oplus\mathcal{D}_p\oplus\mathcal{D}_p^\perp$.
Note that $\mathcal{D}_p^\perp$ is invariant with respect to
$\varphi_i$, $i\in\{1,2,3\}$.

\smallskip

We now prove a rather unexpected result concerning the dimensions of
subspaces $\xi_p^t\subset T_pM$ and $\xi_p^n\subset T_pM^\perp$.
\end{rem}

\begin{prop}
Let
$(\overline{M}^{4n+3},(\varphi_\alpha,\xi_\alpha,\eta_\alpha)_{\alpha=\overline{1,3}},\overline{g})$
be a manifold endowed with a metric mixed 3-structure and let $M$ be
an anti-invariant submanifold of $\overline{M}$, such that the
structure vector fields are not all normal to the submanifold. Then
$\dim \xi_p^t=1$ and $\dim \xi_p^n=2$.
\end{prop}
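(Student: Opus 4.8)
The plan is to localise at a point $p$ and study the three‑dimensional subspace $V=\mathrm{Sp}\{\xi_{1p},\xi_{2p},\xi_{3p}\}\subset T_p\overline{M}$. First I would record that $V$ is invariant under every $\varphi_\alpha$: using $\varphi_\alpha\xi_\alpha=0$ together with (\ref{4}) (so that, e.g., $\varphi_1\xi_2=-\xi_3$, $\varphi_1\xi_3=-\xi_2$, and cyclically), each $\varphi_\alpha$ carries the generators of $V$ to multiples of generators, with $\ker(\varphi_\alpha|_V)=\mathbb{R}\xi_\alpha$; hence $\varphi_\alpha|_V$ has rank $2$. Put $W=V\cap T_pM$ and $W'=V\cap T_pM^\perp$. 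These are $\overline{g}$‑orthogonal and meet only in $0$ (since $T_pM\cap T_pM^\perp=0$), so $W\oplus W'\subseteq V$. Because $\xi_p^t$ and $\xi_p^n$ are the images of $V$ under the orthogonal projections onto $T_pM$ and $T_pM^\perp$, whose restrictions to $V$ have kernels exactly $W'$ and $W$, one gets $\dim\xi_p^t=3-\dim W'$ and $\dim\xi_p^n=3-\dim W$. Thus the proposition is \emph{equivalent} to showing $\dim W=1$ and $\dim W'=2$.

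The next step is the inclusion $\varphi_\alpha(W)\subseteq W'$: if $w\in W\subseteq T_pM$ then $\varphi_\alpha w\in T_pM^\perp$ by anti‑invariance, while $\varphi_\alpha w\in V$ by the invariance just noted, so $\varphi_\alpha w\in W'$. From this I would deduce the upper bound $\dim W\le 1$. Indeed $\dim W=3$ would mean all $\xi_\alpha$ are tangent, which is excluded by Lemma~\ref{3.2}; and if $\dim W=2$, then since $\xi_1,\xi_2,\xi_3$ span $V$ some $\xi_\alpha\notin W$, so $W\cap\mathbb{R}\xi_\alpha=0$ and $\varphi_\alpha|_W$ is injective, giving $\dim W'\ge\dim\varphi_\alpha(W)=2$ and hence $\dim(W\oplus W')\ge4>\dim V$, a contradiction.

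For the lower bounds I would take any $0\neq w=\sum_\alpha c_\alpha\xi_{\alpha p}\in W$ and write out $\varphi_1 w,\varphi_2 w,\varphi_3 w\in W'$ in the basis $\{\xi_{\alpha p}\}$, obtaining the coefficient matrix with rows $(0,-c_3,-c_2)$, $(c_3,0,c_1)$, $(c_2,-c_1,0)$. Its determinant vanishes, but deleting its $\alpha$‑th row and column yields a $2\times2$ minor equal to $c_\alpha^2$; since some $c_\alpha\neq0$, the matrix has rank $2$, so $\varphi_1w,\varphi_2w,\varphi_3w$ span a two‑dimensional subspace of $W'$ and $\dim W'\ge2$. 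Combining $\dim W\le1$, $\dim W'\ge2$ and $\dim W+\dim W'\le\dim V=3$, the moment one knows $W\neq0$ the inequalities force $\dim W=1$ and $\dim W'=2$, whence $\dim\xi_p^t=1$ and $\dim\xi_p^n=2$.

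The crux — and the step I expect to be the real obstacle — is precisely showing $W=V\cap T_pM\neq0$, i.e. that $V$ contains a genuinely tangent vector. The hypothesis only provides $\xi_p^t\neq0$, that is $V\not\subseteq T_pM^\perp$, and one must upgrade a nonzero tangential \emph{component} into an actual vector of $V$ lying in $T_pM$. This is subtle because, contrary to the invariant case, $\varphi_\alpha$ does \emph{not} preserve the normal bundle, so projecting (\ref{4}) only gives $[\varphi_\alpha\xi_{\beta p}^{\,n}]^t=\tau_\beta\xi_{\gamma p}^{\,t}$, which by itself is compatible with $\dim W=0$; the naive conclusion that the competing tangential components vanish is therefore not available. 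I would expect to need the effective input that at least one structure vector field is actually tangent to $M$ (so that it lies in $W$), after which the counting above closes the argument.
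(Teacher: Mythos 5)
Your partial results are all correct: the reduction to $W=V\cap T_pM$ and $W'=V\cap T_pM^\perp$ via $\dim\xi_p^t=3-\dim W'$ and $\dim\xi_p^n=3-\dim W$ is sound, the bound $\dim W\le 1$ is proved cleanly (and in fact yields a slicker exclusion of the case $\dim\xi_p^n=1$ than the paper's own Case~II, which argues by contradiction from $\varphi_\alpha\xi_{\alpha p}^n=0$), and the rank-two computation showing $\dim W'\ge 2$ whenever $W\ne 0$ is right. But the gap you flag at the end is genuine and is exactly the load-bearing step: nothing in your argument rules out $\dim W=0$, i.e.\ the configurations $\dim\xi_p^n=3$ with $\dim\xi_p^t\in\{1,2,3\}$, and no purely local manipulation of the identities (\ref{1})--(\ref{8}) inside the three-dimensional space $V$ will do so.

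The paper closes this by a global dimension count, which is the one idea your proposal is missing. Writing $q=\dim\xi_p^t$, $r=\dim\xi_p^n$ and $s=\dim Q_p$ (the orthocomplement of $\xi_p^t$ in $T_pM$), one has $\dim\mathcal{D}_p=3s$ for $\mathcal{D}_p=\varphi_1(Q_p)\oplus\varphi_2(Q_p)\oplus\varphi_3(Q_p)$, and the residual piece $\mathcal{D}_p^\perp$ of $T_pM^\perp$ is invariant under all three $\varphi_\alpha$, hence of dimension $4t$. Then $4n+3=4t+4s+q+r$ forces $q+r\equiv 3\pmod 4$; since $q\ge 1$ (hypothesis) and $r\ge 1$ (Lemma \ref{3.2}), with both at most $3$, one gets $q+r=3$. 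In your notation this says precisely $\dim W+\dim W'=3$, i.e.\ $V=W\oplus W'$, so $W=0$ would force $W'=V$ and all $\xi_\alpha$ normal, a contradiction. Note also that the effective input is not, as you guessed, that some individual $\xi_\alpha$ is tangent --- that need not hold, and the paper does not establish it in its Case~II --- but only that some nonzero linear combination of the $\xi_{\alpha p}$ is tangent, and this comes from the mod-$4$ count rather than from a pointwise identity. With that count added, your $\dim W\le 1$ and the rank computation finish the proof.
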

\begin{proof}
We put $q={\rm dim} \xi_p^t$, $r={\rm dim} \xi_p^n$. If the
dimension of $Q_p$ is $s$, then it is obvious that the dimension of
$\mathcal{D}_p$ is $3s$. On the other hand, since the subspace
$\mathcal{D}_p^\perp$ is invariant with respect to each
$\varphi_\alpha$, it follows that its dimension is $4t$. Taking into
account that we have the decomposition
\[T_p\overline{M}=T_pM\oplus
T_pM^\perp=\xi_p^t\oplus
Q_p\oplus\xi_p^n\oplus\mathcal{D}_p\oplus\mathcal{D}_p^\perp\] we
obtain
\[
4n+3=4t+4s+r+q
\]
and so we deduce that $q+r\equiv\ 3 \mod 4$. In view of Lemma
\ref{3.2} and since $\xi_{\alpha p}^t\neq0$, for $\alpha=1,2$ or
$3$, we have that $q,r\in\{1,2,3\}$ and so we conclude that $(q=1,\
r=2)$ or $(q=2,\ r=1)$.

We distinguish two cases.\\
\emph{Case I.} If $\xi_{\alpha p}^n=0$, then $\xi_{\alpha}$ is
tangent to $M$ and using (\ref{4}) and taking into account that $M$
is anti-invariant, we obtain that $\xi_{\beta}$ and
$\xi_{\gamma}$ are both normal to $M$, where
$\{\alpha,\beta,\gamma\}=\{1,2,3\}$. Therefore we have $q=1$ and
$r=2$.\\
\emph{Case II.} If $\xi_{\alpha p}^n\neq0$, then we prove that is
not possible to have $q=2$ and $r=1$. Indeed, if $r=1$, then
\[\xi_{\beta p}^n=a\xi_{\alpha p}^n,\ \xi_{\gamma p}^n=b\xi_{\alpha p}^n,\]
where $\{\alpha,\beta,\gamma\}=\{1,2,3\}$, and from (\ref{8}) we
obtain
\begin{equation}\label{00001}
g(\varphi_\alpha\xi_{\alpha p}^n,\xi_{\alpha
p}^n)=g(\varphi_\alpha\xi_{\alpha p}^n,\xi_{\beta
p}^n)=g(\varphi_\alpha\xi_{\alpha p}^n,\xi_{\gamma p}^n)=0.
\end{equation}

Since each $\eta_i$ vanishes on $Q_p$, $i\in\{1,2,3\}$, making use
of $(\ref{6})$, $(\ref{7})$ and ($\ref{8}$) we derive for all $X\in
Q_p$:
\begin{equation}\label{00002}
g(\varphi_\alpha\xi_{\alpha p}^n,\varphi_\alpha
X)=g(\varphi_\alpha\xi_{\alpha p}^n,\varphi_\beta
X)=g(\varphi_\alpha\xi_{\alpha p}^n,\varphi_\gamma X)=0.
\end{equation}

On the other hand, since $\mathcal{D}_p^\perp$ is invariant with
respect to $\varphi_\alpha$, we also obtain using ($\ref{8}$) that
we have:
\begin{equation}\label{00003}
g(\varphi_\alpha\xi_{\alpha p}^n,U)=-g(\xi_{\alpha
p}^n,\varphi_\alpha U)=0,
\end{equation}
for all $U\in \mathcal{D}_p^\perp$.

From (\ref{00001}), (\ref{00002}) and (\ref{00003}) we deduce that
$\varphi_\alpha\xi_{\alpha p}^n\in T_pM$. On the other hand, taking
account of (\ref{2}) and since $M$ is anti-invariant, we obtain
\[
\varphi_\alpha\xi_{\alpha p}^n=-\varphi_\alpha\xi_{\alpha p}^t\in
T_pM^\perp.
\]
Therefore it follows that $\varphi_\alpha\xi_{\alpha p}^n=0$ and
using (\ref{1}) we get
\[
0=\varphi^2_\alpha \xi_{\alpha
p}^n=\tau_\alpha[\eta_\alpha(\xi_{\alpha p}^n)\xi_{\alpha
p}^t+(\eta_\alpha(\xi_{\alpha p}^n)-1)\xi_{\alpha p}^n],
\]
which leads to a contradiction: $0=\eta_\alpha(\xi_{\alpha p}^n)=1$.
Therefore it is not possible that $q=2$ and $r=1$.
\end{proof}

\begin{cor}
Let
$(\overline{M}^{4n+3},(\varphi_\alpha,\xi_\alpha,\eta_\alpha)_{\alpha=\overline{1,3}},\overline{g})$
be a manifold endowed with a metric mixed 3-structure and let $M$ be
an anti-invariant submanifold of $\overline{M}$, such that
$\xi_{\alpha p}^t\neq0$, for all $p\in M$ and $\alpha=1,2$ or $3$.
Then it follows that the mapping $ \xi:p\in M\mapsto \xi_p^t\subset
T_pM $ defines a non-degenerate distribution of dimension 1 on $M$.
\end{cor}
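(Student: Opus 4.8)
The plan is to leverage the preceding Proposition, which already establishes that $\dim \xi_p^t = 1$ at every point $p \in M$, and then to promote this pointwise fact into a statement about a genuine distribution on $M$. The first thing I would do is confirm that the map $\xi \colon p \mapsto \xi_p^t$ assigns a $1$-dimensional subspace of $T_pM$ to each point, which is immediate from the Proposition. Since $\xi_p^t = \mathrm{Sp}\{\xi_{1p}^t, \xi_{2p}^t, \xi_{3p}^t\}$ and this span has dimension exactly one everywhere, it follows that the three tangential components $\xi_{1p}^t, \xi_{2p}^t, \xi_{3p}^t$ are all proportional at each point, with at least one of them nonzero by hypothesis.

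Next I would address non-degeneracy of the distribution. A rank-$1$ distribution is non-degenerate precisely when the induced metric $g$ restricted to $\xi_p^t$ is non-degenerate, \emph{i.e.} when the single spanning vector has nonzero length. Here I would argue as follows: whichever $\xi_{\alpha p}^t \neq 0$ spans $\xi_p^t$, I can relate its causal character to that of the ambient structure vector field $\xi_\alpha$ together with its normal component. Since the submanifold is assumed non-degenerate throughout (per the standing assumption at the end of Section 2), the restriction of $\overline{g}$ to $T_pM$ is non-degenerate, so the line $\xi_p^t$ either is non-degenerate or lies in the radical — but a non-degenerate ambient restriction has trivial radical, forcing $\overline{g}|_{\xi_p^t}$ to be non-degenerate. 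I expect this to be the cleanest route, avoiding any delicate sign bookkeeping on $\varepsilon_\alpha$.

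The remaining point is smoothness: to call $\xi$ a distribution one needs the assignment $p \mapsto \xi_p^t$ to vary smoothly, which amounts to exhibiting a local smooth nonvanishing section. Here the main obstacle is that \emph{which} of the three $\xi_{\alpha p}^t$ is nonzero may change from point to point, so no single $\xi_\alpha^t$ need be globally nonvanishing on $M$. I would resolve this by noting that the tangential projection is a smooth bundle map, so each $\xi_\alpha^t$ is a smooth (possibly vanishing) tangent vector field; locally around any $p$, at least one $\xi_\alpha^t$ is nonzero and hence nonvanishing on a neighborhood by continuity, providing the required local frame for the rank-$1$ subbundle. Patching these local frames yields the smooth line distribution, and the constant rank $1$ from the Proposition guarantees it is a subbundle. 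This smoothness/patching step is where I would be most careful, since it is the only place the argument moves beyond pointwise linear algebra.
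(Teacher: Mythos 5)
Your smoothness argument is fine, but the non-degeneracy step rests on a false dichotomy. You claim that because $\overline{g}|_{T_pM}$ is non-degenerate, the line $\xi_p^t$ ``either is non-degenerate or lies in the radical'' of $T_pM$, and conclude from the triviality of that radical. This is not how degenerate subspaces work in indefinite signature: a non-degenerate inner product space of signature $(r,s)$ with $r,s\geq 1$ contains null vectors $v\neq 0$ with $g(v,v)=0$, and the line they span is a degenerate subspace even though $v$ does not belong to the radical of the whole space (which is zero). Since the induced metric on $M$ is in general indefinite here, nothing in your argument rules out $\xi_p^t$ being a null line, and that is precisely the content of the word ``non-degenerate'' in the statement. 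So the one claim that actually needs proving is the one your dichotomy assumes away.

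The non-degeneracy does follow, but from the second half of the preceding Proposition, namely $\dim\xi_p^n=2$, which you never use. Write $\xi_{\alpha p}^t=c_\alpha v$ with $v$ spanning $\xi_p^t$ (possible since $\dim\xi_p^t=1$). If $g(v,v)=0$, then $g(\xi_{\alpha p}^t,\xi_{\beta p}^t)=0$ for all $\alpha,\beta$, and since $\overline{g}(\xi_\alpha,\xi_\beta)=\varepsilon_\alpha\delta_{\alpha\beta}$ by \eqref{3}, \eqref{7} and \eqref{8}, the orthogonality of $T_pM$ and $T_pM^\perp$ gives $\overline{g}(\xi_{\alpha p}^n,\xi_{\beta p}^n)=\varepsilon_\alpha\delta_{\alpha\beta}$. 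The Gram matrix of $\{\xi_{1p}^n,\xi_{2p}^n,\xi_{3p}^n\}$ is then invertible, so these three vectors are linearly independent and $\dim\xi_p^n=3$, contradicting $\dim\xi_p^n=2$. Hence $g(v,v)\neq 0$ and the line is non-degenerate. (The paper states the Corollary without proof as an immediate consequence of the Proposition; the point is that \emph{both} conclusions of the Proposition are needed, whereas you only invoke $\dim\xi_p^t=1$.)
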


In general, an invariant submanifold of a mixed 3-structure is
either tangent or normal to \emph{all} the three structure vector
fields (this is the motivation for the analysis in the last two
sections of the paper):

\begin{prop}
Let
$(\overline{M},(\varphi_\alpha,\xi_\alpha,\eta_\alpha)_{\alpha=\overline{1,3}},\overline{g})$
be a manifold endowed with a metric mixed 3-structure and let $M$ be
an invariant submanifold of $\overline{M}$. Then the structure
vector fields are all either tangent or normal to the submanifold.
\end{prop}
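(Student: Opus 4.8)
The plan is to combine two structural facts. First, that on an invariant submanifold each $\varphi_\alpha$ preserves not only $TM$ but also the normal bundle $TM^\perp$; second, that the three structure vector fields are cyclically interchanged (up to sign) by the $\varphi_\alpha$ through (\ref{4}). Granting these, a short type-chasing argument will force the three fields to be of a single type.

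First I would record that invariance forces $\varphi_\alpha(T_pM^\perp)\subset T_pM^\perp$. Indeed, for $X\in T_pM$ and $N\in T_pM^\perp$ the skew-symmetry relation in (\ref{8}) gives $\overline g(\varphi_\alpha N,X)=-\overline g(N,\varphi_\alpha X)=0$, since $\varphi_\alpha X\in T_pM$ by invariance; hence $\varphi_\alpha N$ is orthogonal to $T_pM$, so $\varphi_\alpha N\in T_pM^\perp$. Thus each $\varphi_\alpha$ respects the decomposition $T_p\overline M=T_pM\oplus T_pM^\perp$.

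Next I would establish a dichotomy: each $\xi_\alpha$ is \emph{either} tangent \emph{or} normal to $M$. Writing $\xi_\alpha=\xi_\alpha^t+\xi_\alpha^n$ with $\xi_\alpha^t\in T_pM$ and $\xi_\alpha^n\in T_pM^\perp$, relation (\ref{2}) gives $\varphi_\alpha\xi_\alpha=0$; since $\varphi_\alpha\xi_\alpha^t\in T_pM$ and $\varphi_\alpha\xi_\alpha^n\in T_pM^\perp$ lie in complementary subspaces, their sum vanishing yields $\varphi_\alpha\xi_\alpha^t=0$ separately. From (\ref{1}) one checks that $\ker\varphi_\alpha=\mathrm{span}\{\xi_\alpha\}$, so $\xi_\alpha^t=\lambda\xi_\alpha$ for some scalar $\lambda$; comparing tangential and normal parts of this identity forces either $\lambda=0$, whence $\xi_\alpha^t=0$ and $\xi_\alpha$ is normal, or $\lambda\neq0$, whence $\xi_\alpha^n=0$ and $\xi_\alpha$ is tangent.

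Finally I would propagate the type across the three fields using (\ref{4}): for an even permutation $(\alpha,\beta,\gamma)$ of $(1,2,3)$ one has $\varphi_\alpha\xi_\beta=\tau_\beta\xi_\gamma$, and since $\varphi_\alpha$ maps $T_pM$ into $T_pM$ and $T_pM^\perp$ into $T_pM^\perp$ while $\tau_\beta\neq0$, the field $\xi_\gamma$ must be of the same type as $\xi_\beta$. Cycling through the three even permutations shows that $\xi_1,\xi_2,\xi_3$ share a common type, i.e. they are all tangent or all normal, as claimed. The step demanding most care is the dichotomy: it is essential to have first secured the invariance of $TM^\perp$ under $\varphi_\alpha$, for only then can $\varphi_\alpha\xi_\alpha=0$ be split componentwise and the type-propagation argument be run; without it the conclusion genuinely fails to follow.
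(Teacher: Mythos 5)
Your proof is correct and follows essentially the same route as the paper's: decompose $\xi_\alpha=\xi_\alpha^t+\xi_\alpha^n$, use invariance together with the skew-symmetry in (\ref{8}) to force $\varphi_\alpha\xi_\alpha^t=\varphi_\alpha\xi_\alpha^n=0$, apply (\ref{1}) to rule out a mixed component, and propagate the type through (\ref{4}). The only cosmetic differences are that you first isolate the invariance of $TM^\perp$ under $\varphi_\alpha$ (which the paper proves later as a separate lemma) and phrase the final dichotomy via $\ker\varphi_\alpha=\mathrm{span}\{\xi_\alpha\}$ instead of expanding $\varphi_\alpha^2\xi_\alpha^t$; both rest on the same identities.
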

\begin{proof}
We suppose that we have the decomposition:
\begin{equation}\label{new2}
\xi_\alpha=\xi_\alpha^t+\xi_\alpha^n,
\end{equation}
 where
$\xi_\alpha^t$ denotes the tangential component of $\xi_\alpha$ and
$\xi_\alpha^n$ is the normal component of $\xi_\alpha$.

Applying now $\varphi_\alpha$ in (\ref{new2}) and taking account of
(\ref{2}) we obtain:
\[
\varphi_\alpha \xi_\alpha^n=-\varphi_\alpha
\xi_\alpha^t\in\Gamma(TM),
\]
since $M$ is an invariant submanifold of $\overline{M}$.

On the other hand, we derive from (\ref{8}) that we have for all
$X\in\Gamma(TM)$:
\[
g(\varphi_\alpha
\xi_\alpha^n,X)=-\overline{g}(\xi_\alpha^n,\varphi_\alpha X)=0.
\]

Therefore we deduce that $\varphi_\alpha \xi_\alpha^n=0$ and so
$\varphi_\alpha \xi_\alpha^t=0$. Using now (\ref{1}) and
(\ref{new2}) we find
\begin{eqnarray}
0=\varphi_\alpha^2
\xi_\alpha^t&=&\tau_\alpha[-\xi_\alpha^t+\eta_\alpha(\xi_\alpha^t)\xi_\alpha]\nonumber\\
&=&\tau_\alpha[(\eta_\alpha(\xi_\alpha^t)-1)\xi_\alpha^t+\eta_\alpha(\xi_\alpha^t)\xi_\alpha^n].\nonumber
\end{eqnarray}

Consequently, if $\xi_\alpha^t\neq0$ and $\xi_\alpha^n\neq0$, we
obtain a contradiction equating the tangential and normal components
in the above relation. Hence we deduce that $\xi_\alpha$ is either
tangent or normal to the submanifold. Finally, it is obvious that if
one of the structure vector fields is tangent to the submanifold,
then from (\ref{4}) it follows that the next two structure vector
fields are also tangent to the submanifold, because the tangent
space of an invariant submanifold is closed under the action of
$(\varphi_\alpha)_{\alpha=\overline{1,3}}$.
\end{proof}

As in the Riemannian case, we have:

\begin{prop}
Let $(\overline{M},(\varphi_\alpha,\xi_\alpha,\eta_\alpha)_{\alpha=\overline{1,3}},\overline{g})$ be a mixed 3-cosymplectic or mixed 3-Sasakian manifold and let
$M$ be a totally umbilical submanifold tangent to the structure vector fields. Then $M$ is totally geodesic.
\end{prop}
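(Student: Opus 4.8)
The plan is to exploit the special form taken by the second fundamental form of a totally umbilical submanifold and to show that feeding a \emph{unit} structure vector field into it forces the mean curvature to vanish. Recall that total umbilicity means there is a normal vector field $H$ (the mean curvature vector) with $h(X,Y)=g(X,Y)H$ for all $X,Y\in\Gamma(TM)$. Thus proving $M$ totally geodesic is equivalent to proving $H=0$.

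First I would use the hypothesis that each $\xi_\alpha$ is tangent to $M$, so that the Gauss formula (\ref{13}) applies to $\overline{\nabla}_{X}\xi_\alpha$ and its normal part is precisely $h(X,\xi_\alpha)$. Specializing to $X=\xi_\alpha$ and invoking total umbilicity together with (\ref{8}), the normal component of $\overline{\nabla}_{\xi_\alpha}\xi_\alpha$ equals
\[
h(\xi_\alpha,\xi_\alpha)=g(\xi_\alpha,\xi_\alpha)H=\varepsilon_\alpha H.
\]

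The key step is then to compute $\overline{\nabla}_{\xi_\alpha}\xi_\alpha$ directly from the ambient structure, handling the two cases in parallel. In the mixed 3-cosymplectic case, (\ref{11}) gives $\overline{\nabla}\xi_\alpha=0$, so in particular $\overline{\nabla}_{\xi_\alpha}\xi_\alpha=0$. In the mixed 3-Sasakian case, (\ref{12}) gives $\overline{\nabla}_{\xi_\alpha}\xi_\alpha=-\varepsilon_\alpha\varphi_\alpha\xi_\alpha$, which vanishes because $\varphi_\alpha\xi_\alpha=0$ by (\ref{2}). In either case $\overline{\nabla}_{\xi_\alpha}\xi_\alpha=0$, hence its normal component is zero, and comparing with the previous display yields $\varepsilon_\alpha H=0$.

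Finally, since $\varepsilon_\alpha=\pm1\neq 0$, we conclude $H=0$, so $h\equiv 0$ and $M$ is totally geodesic. I do not expect a genuine obstacle: the single point requiring care is that in the semi-Riemannian setting one must test umbilicity along a \emph{non-null} tangent direction in order to ``divide out'' the coefficient, and this is automatic here because $\overline{g}(\xi_\alpha,\xi_\alpha)=\varepsilon_\alpha=\pm1$ guarantees that each structure vector field is non-null. The argument is uniform over $\alpha$ and over the two ambient structures, which is why I would present both cases together after reducing everything to the vanishing of $\overline{\nabla}_{\xi_\alpha}\xi_\alpha$.
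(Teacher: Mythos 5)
Your proof is correct and follows essentially the same route as the paper: both compute $\overline{\nabla}_{\xi_\alpha}\xi_\alpha=0$ from (\ref{11}) or from (\ref{12}) together with $\varphi_\alpha\xi_\alpha=0$, read off $h(\xi_\alpha,\xi_\alpha)=0$ via the Gauss formula, and then use umbilicity and $g(\xi_\alpha,\xi_\alpha)=\varepsilon_\alpha=\pm1$ to force $H=0$. Your observation that the non-nullity of $\xi_\alpha$ is what lets one divide out the coefficient is exactly the (implicit) point in the paper's final step.
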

\begin{proof}
If $\overline{M}$ is a mixed 3-cosymplectic manifold, then from
Gauss formula and (\ref{11}) we obtain:
\[
0=\overline{\nabla}_X\xi_\alpha=\nabla_X\xi_\alpha+h(X,\xi_\alpha)
\]
for all $X\in\Gamma(TM)$ and $\alpha=1,2,3$. Therefore, equating the
normal components we find:
\begin{equation}\label{16}
h(X,\xi_\alpha)=0.
\end{equation}

If $\overline{M}$ is a mixed 3-Sasakian manifold, then from Gauss
formula and (\ref{12}) we similarly obtain:
\[
-\varepsilon_\alpha\varphi_\alpha
X=\overline{\nabla}_X\xi_\alpha=\nabla_X\xi_\alpha+h(X,\xi_\alpha).
\]
Taking $X=\xi_\alpha$ in the above equality and using (\ref{2}) we
derive:
\[
0=\nabla_{\xi_\alpha}\xi_\alpha +h(\xi_\alpha,\xi_\alpha)
\]
and so we get:
\begin{equation}\label{17}
h(\xi_\alpha,\xi_\alpha)=0.
\end{equation}

On the other hand, since $M$ is totally umbilical, its
second fundamental form satisfies:
\begin{equation}\label{18}
h(X,Y)=g(X,Y)H
\end{equation}
for all $X,Y\in\Gamma(TM)$, where $H$ is the mean curvature vector
field on $M$.

Taking $X=Y=\xi_\alpha$ in (\ref{18}) and using (\ref{16}) - if the
manifold $\overline{M}$ is mixed 3-cosymplectic, or (\ref{17}) - if
the manifold $\overline{M}$ is mixed 3-Sasakian, we obtain
\[0=\varepsilon_\alpha H\]
and therefore $H=0$. Using again (\ref{18}) we obtain the assertion.
\end{proof}

\begin{cor}\label{3.7}
A totally geodesic submanifold of a mixed 3-Sasakian manifold,
tangent to the structure vector fields, is invariant.
\end{cor}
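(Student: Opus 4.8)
The plan is to prove the contrapositive-free direct statement: assuming $M$ is totally geodesic and tangent to the structure vector fields, show that the tangent spaces are $\varphi_\alpha$-invariant. The natural starting point is formula (\ref{12}), which in the mixed 3-Sasakian case gives $\overline{\nabla}_X\xi_\alpha=-\varepsilon_\alpha\varphi_\alpha X$ for every $X$. Since $M$ is totally geodesic, its second fundamental form $h$ vanishes identically, so the Gauss formula (\ref{13}) reduces to $\overline{\nabla}_X Y=\nabla_X Y\in\Gamma(TM)$ for all $X,Y\in\Gamma(TM)$. In particular, taking $Y=\xi_\alpha$ (which is tangent by hypothesis), we get that $\overline{\nabla}_X\xi_\alpha$ is tangent to $M$.

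Combining these two observations is the crux. For any $X\in\Gamma(TM)$ we have
\[
-\varepsilon_\alpha\varphi_\alpha X=\overline{\nabla}_X\xi_\alpha=\nabla_X\xi_\alpha\in\Gamma(TM),
\]
and since $\varepsilon_\alpha=\pm1$ is a nonzero scalar, this forces $\varphi_\alpha X\in\Gamma(TM)$. As this holds for every tangent $X$ and every $\alpha\in\{1,2,3\}$, we conclude $\varphi_\alpha(T_pM)\subset T_pM$ for all $p$ and all $\alpha$, which is exactly the definition of an invariant submanifold. This completes the argument.

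The proof is essentially a two-line chain of identities, so I do not anticipate a genuine obstacle; the main thing to be careful about is the logical direction. One should make explicit that being totally geodesic means $h\equiv0$, and that the hypothesis of tangency is what lets us substitute $\xi_\alpha$ for the second slot in the Gauss formula. If one wanted a cleaner statement, it would be worth noting that this corollary is the converse half of the chain ``invariant $\Rightarrow$ totally geodesic'' that the mixed 3-Sasakian theory also yields, so that together they give the full equivalence advertised in the introduction (``totally geodesic if and only if invariant''); here only the one implication is claimed, and the computation above suffices.
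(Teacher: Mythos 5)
Your argument is correct and is exactly the paper's proof: both use \eqref{12} together with the Gauss formula and $h\equiv 0$ to conclude that $\varphi_\alpha X=-\varepsilon_\alpha\nabla_X\xi_\alpha$ is tangent to $M$. You merely spell out the step (tangency of $\overline{\nabla}_X\xi_\alpha$) that the paper leaves implicit.
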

\begin{proof}
From  (\ref{12}) we obtain that
\[
\varphi_\alpha X=-\varepsilon_\alpha\nabla_X\xi_\alpha\in\Gamma(TM),\
\forall X\in\Gamma(TM)
\]
and the conclusion follows.
\end{proof}

\subsection{Examples}

\subsubsection{Images of holomorphic maps}

Let $M$, $M'$ be manifolds endowed with metric mixed 3-structures
$((\varphi_\alpha,\xi_\alpha,\eta_\alpha)_{\alpha=\overline{1,3}},g)$,
$((\varphi'_\alpha,\xi'_\alpha,\eta'_\alpha)_{\alpha=\overline{1,3}},g')$.
We say that a smooth map $f:M\rightarrow N$ is holomorphic if the
equation
\begin{equation}\label{new1}
f_*\circ \varphi_\alpha=\varphi_\alpha'\circ f_*
\end{equation}
holds for all $\alpha\in\{1,2,3\}$.

We remark now that if $f$ is an holomorphic embedding such that the
image of $f$, denoted by $N'=f(M)$, is a non-degenerate submanifold,
then it is an invariant submanifold. Indeed, if we consider
$X_*,Y_*\in\Gamma(TN')$ such that $f_*X=X_*$ and $f_*Y=Y_*$, where
$X,Y\in\Gamma(TM)$, we obtain using (\ref{new1}):
\[
\varphi'_\alpha X_*=\varphi'_\alpha f_*X=f_*( \varphi_\alpha
X)\in\Gamma(TN')
\]
and therefore $N'$ is an invariant submanifold of $M'$.

On the other hand, we can remark that if $M$ is a manifold endowed
with a metric mixed 3-structure
$((\varphi_\alpha,\xi_\alpha,\eta_\alpha)_{\alpha=\overline{1,3}},g)$
and $M'$ is an invariant submanifold of $M$, tangent to the
structure vector fields, then the restriction of
$((\varphi_\alpha,\xi_\alpha,\eta_\alpha)_{\alpha=\overline{1,3}},g)$
to $M'$ is a metric mixed 3-structure and the inclusion map
$i:M'\rightarrow M$ is holomorphic.

\subsubsection{Correspondence between submanifolds of mixed 3-Sasakian manifolds and paraquaternionic K\"ahler manifolds via semi-Riemannian submersions}

Consider the semi-Riemannian submersion
$\pi:S^{4n+3}_{2n+1}\rightarrow P^n(\mathbb{B})$, with totally
geodesic fibres $S^3_1$. It was used by Bla\v{z}i\'{c}
in order to give a natural and geometrically oriented definition of the
paraquaternionic projective space \cite{BLZ}. If
$((\varphi_\alpha,\xi_\alpha,\eta_\alpha)_{\alpha=\overline{1,3}},g)$
is the standard positive mixed 3-Sasakian structure on
$S^{4n+3}_{2n+1}$ (see \cite{IVV}), then the semi-Riemannian metric
$g'$ of $P^n(\mathbb{B})$ is induced by
\[
g'(X',Y')\circ\pi=g(X^h,Y^h),
\]
for all vector fields $X',Y'\in\Gamma(P^n(\mathbb{B}))$, where
$X^h,Y^h$ are the unique horizontal lifts of $X',Y'$ on
$S^{4n+3}_{2n+1}$. Moreover, each canonical local basis
$H=(J_{\alpha})_{\alpha=\overline{1,3}}$ of $P^n(\mathbb{B})$ is
related with structures $(\varphi_\alpha)_{\alpha=\overline{1,3}}$
of $S^{4n+3}_{2n+1}$ by
\[
J_\alpha X'=\pi_*(\varphi_\alpha X^h),
\]
for any $X'\in\Gamma(P^n(\mathbb{B}))$.

Let now $M$ be an  immersed submanifold of $S^{4n+3}_{2n+1}$ and let $N$
be an immersed submanifold of $P^n(\mathbb{B})$ such that
$\pi^{-1}(N)=M$. Then we have that $N$ is a paraquaternionic
(respectively totally real) submanifold of $P^n(\mathbb{B})$ if and
only if $M$ is an invariant (respectively anti-invariant)
submanifold of $S^{4n+3}_{2n+1}$, tangent (respectively normal) to
the structure vector fields.

In particular, if we consider the
canonical paraquaternionic immersion $i:P^m(\mathbb{B})\rightarrow
P^n(\mathbb{B})$, where $m<n$, we obtain that $M=S^{4m+3}_{2m+1}$ is
an invariant totally geodesic submanifold of $S^{4n+3}_{2n+1}$,
tangent to the structure vector fields. Similarly, if we take the
standard totally real immersion $i:P^m_\nu(\mathbb{R})\rightarrow
P^n(\mathbb{B})$, where $m\leq n$ and $\nu\in\{0,...,m\}$, we
conclude that $M=S^m_\nu$ is an anti-invariant totally geodesic
submanifold of $S^{4n+3}_{2n+1}$, normal to the structure vector
fields.

Moreover, it can be proved that if $\pi:M\rightarrow N$ is a
semi-Riemannian submersion from a mixed 3-Sasakian manifold onto a
paraquaternionic K\"{a}hler manifold which commutes with the
structure tensors of type $(1,1)$ (we note that the corresponding
notion in the Riemannian case was studied in \cite{WAT}), and $M'$,
$N'$ are immersed submanifolds of $M$ and $N$ respectively, such
that $\pi^{-1}(N')=M'$, then $M'$ is an invariant (respectively
anti-invariant) submanifold of $M$, tangent (respectively normal) to
the structure vector fields if and only if $N'$ is a
paraquaternionic (respectively totally real) submanifold of $N$.

\subsubsection{Fibre submanifolds of a semi-Riemannian submersion}
Let $\pi$ be a semi-Riemannian submersion from a manifold $M$
endowed with a metric mixed 3-structure
$((\varphi_\alpha,\xi_\alpha,\eta_\alpha)_{\alpha=\overline{1,3}},g)$
onto an almost hermitian paraquaternionic manifold $(N,\sigma,g')$,
which commutes with the structure tensors of type $(1,1)$. The
horizontal and vertical distributions induced by $\pi$ are closed
under the action of $\varphi_\alpha$, $\alpha=1,2,3$, and therefore
we conclude that the fibres are invariant submanifolds of $M$.
Moreover, we have
\[
J_\alpha\pi_*\xi_\alpha=\pi_*\varphi_\alpha\xi_\alpha=0,
\]
for $\alpha=1,2,3,$ and hence we deduce that $\xi_1,\xi_2,\xi_3$ are
vertical vector fields.

In particular, since the semi-Riemannian submersion
$\pi:S^{4n+3}_{2n+1}\rightarrow P^n(\mathbb{B})$ given above
commutes with the structure tensors of type $(1,1)$, we have that
$S^3_1$ is an invariant submanifold of $S^{4n+3}_{2n+1}$, tangent to
the structure vector fields.

\subsubsection{The Clifford torus $S^1(\frac{1}{\sqrt{2}})\times S^1(\frac{1}{\sqrt{2}})\subset S^7_3$} Let $S^7_3$ be the 7-dimensional
unit pseudo-sphere in $\mathbb{R}^8_4$, endowed with standard
positive mixed 3-Sasakian structure
$((\varphi_\alpha,\xi_\alpha,\eta_\alpha)_{\alpha=\overline{1,3}},g)$
(see \cite{IVV}). Let $H=\{J_1,J_2,J_3\}$ be the almost
para-hypercomplex structure of $\mathbb{R}^8_4$ defined by
\[
J_1((x_i)_{i=\overline{1,8}})=
(-x_7,x_8,-x_5,x_6,-x_3,x_4,-x_1,x_2),
\]
\[ J_2((x_i)_{i=\overline{1,8}})=
(x_8,x_7,x_6,x_5,x_4,x_3,x_2,x_1),
\]
\[
J_3((x_i)_{i=\overline{1,8}})=(-x_2,x_1,-x_4,x_3,-
x_6,x_5,-x_8,x_7),
\]
which is compatible with the semi-Riemannian $\overline{g}$ on
$\mathbb{R}^8_4$, given by
\[
        \overline{g}((x_i)_{i=\overline{1,8}},(y_i)_{i=\overline{1,8}})=
        -\sum_{i=1}^{4}x_iy_i+\sum_{i=5}^{8}x_iy_i.
\]

If $S^1(\frac{1}{\sqrt{2}})$ is a circle of radius
$\frac{1}{\sqrt{2}}$, we consider the submanifold
$M=S^1(\frac{1}{\sqrt{2}})\times S^1(\frac{1}{\sqrt{2}})$ of
$S^7_3$. The position vector $X$ of $M$ in $S^7_3$ in
$\mathbb{R}^8_4$ has components given by
\[
N=\frac{1}{\sqrt{2}}(0,0,0,0,\cos u_1,\sin u_1,\cos u_2,\sin u_2),
\]
$u_1$ and $u_2$ being parameters on each $S^1$.

The tangent space is spanned by $\{X_1, X_2\}$, where
\[
X_1=\frac{1}{\sqrt{2}}(0,0,0,0,-\sin u_1,\cos u_1,0,0),
\]
\[
X_2=\frac{1}{\sqrt{2}}(0,0,0,0,0,0,-\sin u_2,\cos u_2)
\]
and the structure vector fields $\xi_1,\xi_2,\xi_3$ of $S^7_3$
restricted to $M$ are given by
\[
\xi_1=\frac{1}{\sqrt{2}}(\cos u_2,-\sin u_2,\cos u_1,-\sin
u_1,0,0,0,0),
\]
\[
\xi_2=\frac{1}{\sqrt{2}}(-\sin u_2,-\cos u_2,-\sin u_1,-\cos
u_1,0,0,0,0),
\]
\[
\xi_3=\frac{1}{\sqrt{2}}(0,0,0,0,\sin u_1,-\cos u_1,\sin u_2,-\cos
u_2).
\]

Since $\varphi_\alpha X$ is the tangent part of $J_\alpha X$, for
all $X\in\Gamma(TM)$ and $\alpha\in\{1,2,3\}$ (see \cite{IVV}), we
obtain:
\[
g(\varphi_\alpha X_i,X_j)=\overline{g}(J_\alpha X_i,X_j)=0,
\]
for all $\alpha\in\{1,2,3\}$ and  $i,j\in\{1,2\}$. Therefore $M$ is
an anti-invariant submanifold of $S^7_3$. On the other hand, it is
easy to verify that
 $\xi_1,\xi_2$ are normal to $M$ and since $\xi_3=-X_1-X_2$, we deduce that $\xi_3$ is
tangent to the submanifold.

\section{Anti-invariant submanifolds of manifolds endowed with metric mixed 3-structures, normal to the structure vector fields}

Let $M$ be an $n$-dimensional anti-invariant submanifold of a
manifold endowed with a metric mixed 3-structure
$(\overline{M},(\varphi_\alpha,\xi_\alpha,\eta_\alpha)_{\alpha=\overline{1,3}},\overline{g})$.
From Lemma \ref{3.2} it follows that the structure vector
fields $\xi_1,\xi_2,\xi_3$ cannot be tangent to $M$, unlike the case
of anti-invariant submanifolds in manifolds endowed with almost
contact structures, where the structure vector field can be both
tangent and normal (see \cite{KIM,KON,LOY,YK2}). Next we suppose
that the structure vector fields are normal to $M$.

Define the distribution
$\xi=\{\xi_1\}\oplus\{\xi_2\}\oplus\{\xi_3\}$ and  set
$\mathcal{D}_{\alpha p}=\varphi_\alpha(T_pM)$, for $p\in M$ and
$\alpha=1,2,3$. We note that $\mathcal{D}_{1 p}$, $\mathcal{D}_{2
p}$, $\mathcal{D}_{3 p}$ are mutually orthogonal non-degenerate
vector subspaces of $T_pM^\perp$. Indeed, by using (\ref{6}) and
(\ref{8}) we obtain
\[
\overline{g}(\varphi_\alpha X, \varphi_\beta Y)=-\overline{g}( X,
\varphi_\alpha\varphi_\beta Y)=-\tau_\gamma\overline{g}( X, \varphi_\gamma
Y)=0
\]
for all $X,Y\in T_pM$, where $(\alpha,\beta,\gamma)$ is an even
permutation of $(1,2,3)$.

Moreover, the subspaces
\[
\mathcal{D}_p=\mathcal{D}_{1 p}\oplus\mathcal{D}_{2
p}\oplus\mathcal{D}_{3 p},\ p\in M
\]
define a non-trivial subbundle of dimension $3n$ on $TM^\perp$. Note
that $\mathcal{D}$ and $\xi$ are mutually orthogonal
subbundle of $TM^\perp$ and let $\mathcal{D}^\perp$ be the
orthogonal complementary vector subbundle of $\mathcal{D}\oplus\xi$
in $TM^\perp$. So we have the orthogonal decomposition:
\[TM^\perp=\mathcal{D}\oplus\mathcal{D}^\perp\oplus\xi.\]

\begin{lem}
$(i)$ $\varphi_\alpha \mathcal{D}_{\alpha p}\subset T_pM$, $\forall p\in M$, $\alpha=1,2,3.$

$(ii)$ $\varphi_\alpha \mathcal{D}_{\beta p}\subset \mathcal{D}_{\gamma p}$, $\forall p\in M$, $\alpha=1,2,3.$

$(iii)$ The subbundle $\mathcal{D}^\perp$ is invariant under the action
of $\varphi_\alpha$, $\alpha=1,2,3$.

$(iv)$ $\varphi^2_\alpha (TM^\perp)\subset TM^\perp$,
$\forall\alpha=1,2,3.$
\end{lem}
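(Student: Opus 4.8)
The plan is to exploit the identities (\ref{4})--(\ref{8}) of the mixed $3$-structure together with the anti-invariance of $M$, handling the four claims in the natural order $(i)\to(ii)\to(iii)\to(iv)$ so that the earlier parts feed the later ones. The core mechanism throughout is the relation $\varphi_\alpha^2=\tau_\alpha(-I+\eta_\alpha\otimes\xi_\alpha)$ from (\ref{1}), which lets me turn a double application of $\varphi_\alpha$ back into (almost) the identity, and the skew-symmetry $\overline g(\varphi_\alpha X,Y)=-\overline g(X,\varphi_\alpha Y)$ from (\ref{8}), which is what I will use to test membership in a given subspace by checking orthogonality to its complement.

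\textbf{Parts $(i)$ and $(ii)$.} For $(i)$, take $X\in T_pM$, so that $\varphi_\alpha X\in\mathcal D_{\alpha p}$ is a typical element of $\mathcal D_{\alpha p}$. I apply $\varphi_\alpha$ again and use (\ref{1}): since $\eta_\alpha$ vanishes on $T_pM$ (because $M$ is normal to the structure vector fields, so $\eta_\alpha(X)=\varepsilon_\alpha\overline g(X,\xi_\alpha)=0$ by (\ref{8})), I get $\varphi_\alpha(\varphi_\alpha X)=\varphi_\alpha^2 X=-\tau_\alpha X\in T_pM$. This gives $(i)$ immediately. For $(ii)$, a typical element of $\mathcal D_{\beta p}$ is $\varphi_\beta X$ with $X\in T_pM$; applying $\varphi_\alpha$ and invoking the product rule (\ref{6}) in the form $\varphi_\alpha\varphi_\beta=\tau_\gamma\varphi_\gamma+\tau_\alpha\eta_\beta\otimes\xi_\alpha$, the $\eta_\beta$ term again dies on $T_pM$, leaving $\varphi_\alpha\varphi_\beta X=\tau_\gamma\varphi_\gamma X\in\varphi_\gamma(T_pM)=\mathcal D_{\gamma p}$. (One should note that the indices $(\alpha,\beta,\gamma)$ must be read as an even permutation, matching the convention in (\ref{6}); a parallel computation with the odd ordering lands in the same $\mathcal D$-summand up to the sign carried by $\tau$.)

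\textbf{Part $(iii)$.} This is the part I expect to be the genuine obstacle, because $\mathcal D^\perp$ is defined only as an orthogonal complement rather than by an explicit formula, so I cannot compute on it directly. The strategy is orthogonality: to show $\varphi_\alpha U\in\mathcal D^\perp$ for $U\in\mathcal D_p^\perp$, I must check that $\varphi_\alpha U$ is orthogonal to each of $TM$, to each summand $\mathcal D_{\beta p}$, and to $\xi_p$. For orthogonality to $TM$, I use (\ref{8}): $\overline g(\varphi_\alpha U,X)=-\overline g(U,\varphi_\alpha X)=0$ since $\varphi_\alpha X\in\mathcal D_{\alpha p}\perp\mathcal D^\perp$. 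For orthogonality to $\mathcal D_{\beta p}$, write a test vector as $\varphi_\beta Y$ with $Y\in T_pM$ and compute $\overline g(\varphi_\alpha U,\varphi_\beta Y)$ by pushing one $\varphi$ across via (\ref{8}) and then applying $(ii)$ (or $(i)$ when $\alpha=\beta$) to see that $\varphi_\alpha\varphi_\beta Y$ lands back in one of the $\mathcal D$-summands, to which $U\in\mathcal D^\perp$ is orthogonal. For orthogonality to $\xi_p$, the cleanest route is $\overline g(\varphi_\alpha U,\xi_\beta)=-\overline g(U,\varphi_\alpha\xi_\beta)=\mp\overline g(U,\xi_\gamma)=0$, using (\ref{4}) and $U\perp\xi_p$. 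Having ruled out components in $TM$, $\mathcal D_p$, and $\xi_p$, the vector $\varphi_\alpha U$ must lie in $\mathcal D^\perp$, which is exactly invariance.

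\textbf{Part $(iv)$.} Finally $(iv)$ should follow quickly from the structure of the decomposition $TM^\perp=\mathcal D\oplus\mathcal D^\perp\oplus\xi$ established above, by checking $\varphi_\alpha^2$ on each summand: on $\mathcal D^\perp$ it stays in $\mathcal D^\perp$ by $(iii)$ applied twice; on $\xi$ one computes $\varphi_\alpha^2\xi_\beta$ directly from (\ref{4}), which returns a combination of the $\xi_\gamma$'s and hence stays in $\xi\subset TM^\perp$; and on $\mathcal D_{\beta p}$ one uses $(i)$ and $(ii)$ to see that a single $\varphi_\alpha$ maps $\mathcal D_{\beta p}$ into $TM$ or into another $\mathcal D$-summand, so the second application either returns to $\mathcal D$ or, via $\varphi_\alpha(\text{something in }TM)\in\mathcal D_{\alpha p}$, stays within $TM^\perp$. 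Collecting the three cases shows $\varphi_\alpha^2(TM^\perp)\subset TM^\perp$. The only care needed is bookkeeping of the permutation parity and the signs $\tau_\alpha,\varepsilon_\alpha$ in (\ref{4}) and (\ref{1}), but no step introduces a tangential leak, so the containment holds.
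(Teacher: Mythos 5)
Your proof is correct and follows essentially the same route as the paper's: parts $(i)$ and $(ii)$ from \eqref{1} and \eqref{6} using that $\eta_\beta$ vanishes on $T_pM$, part $(iii)$ by testing orthogonality of $\varphi_\alpha U$ against the complementary summands via the skew-symmetry in \eqref{8}, and part $(iv)$ by combining the first three claims on the decomposition $TM^\perp=\mathcal{D}\oplus\mathcal{D}^\perp\oplus\xi$. If anything, you are slightly more careful than the paper, which omits the (trivial) check that $\varphi_\alpha U$ is orthogonal to $T_pM$ itself.
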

\begin{proof}
$(iv)$ is a consequence of the first three claims. $(i)$ and $(ii)$ follow, respectively, from \eqref{1} and \eqref{6}. It remains to prove $(iii)$. If $U\in\Gamma(\mathcal{D}^\perp)$, then using (\ref{2}) and
(\ref{8}) we  obtain
\[
\overline{g}(\varphi_\alpha U,\xi_\alpha)=-\overline{g}
(U,\varphi_\alpha\xi_\alpha)=0,\ \alpha=1,2,3.
\]

Similarly, using (\ref{4}) and (\ref{8}) we get
\[
\overline{g}(\varphi_\alpha U,\xi_\beta)=-\overline{g}
(U,\varphi_\alpha\xi_\beta)=-\tau_\beta\overline{g} (U,\xi_\gamma)=0
\]
for any even permutation $(\alpha,\beta,\gamma)$ of $(1,2,3)$.

On the other hand, if $U\in\Gamma(\mathcal{D}^\perp)$ and
$X\in\Gamma(TM)$, then using (\ref{1}) and (\ref{8}) we obtain:
\[
\overline{g}(\varphi_\alpha U,\varphi_\alpha X)=-\overline{g}
(U,\varphi_\alpha^2 X)=\tau_\alpha\overline{g} (U,X)=0,\ \alpha=1,2,3
\]
and similarly, using (\ref{6}) and (\ref{8}) we have
\[
\overline{g}(\varphi_\alpha U,\varphi_\beta X)=-\overline{g}
(U,\varphi_\alpha\varphi_\beta X)=-\tau_\gamma\overline{g} (U,\varphi_\gamma
X)=0
\]
for any even permutation $(\alpha,\beta,\gamma)$ of $(1,2,3)$. This ends the proof.
\end{proof}

\begin{lem}
If $M$ is an anti-invariant submanifold of a mixed 3-cosymplectic or
mixed 3-Sasakian manifold
$(\overline{M},(\varphi_\alpha,\xi_\alpha,\eta_\alpha)_{\alpha=\overline{1,3}},\overline{g})$,
normal to the structure vector fields, then the distribution $\xi$
on $\overline{M}$ is integrable.
\end{lem}
\begin{proof}
If $\overline{M}$ is a mixed 3-cosymplectic manifold then the
assertion is a direct consequence of (\ref{11}). On the other hand,
if $M$ is a mixed 3-Sasakian manifold, then using (\ref{4}) and
(\ref{12}) we obtain for any
$N\in\Gamma(\mathcal{D}\oplus\mathcal{D}^\perp)$:
\[
\overline{g}([\xi_\alpha,\xi_\beta],N)=(\varepsilon_\beta\tau_\alpha+\varepsilon_\alpha\tau_\beta)g(\xi_\gamma,N)
=0
\]
for any even permutation $(\alpha,\beta,\gamma)$ of $(1,2,3)$.
\end{proof}

\begin{lem}\label{6.3}
If $M$ is an anti-invariant submanifold of a mixed 3-cosymplectic or
mixed 3-Sasakian manifold
$(\overline{M},(\varphi_\alpha,\xi_\alpha,\eta_\alpha)_{\alpha=\overline{1,3}},\overline{g})$,
normal to the structure vector fields, then the following equation holds
good:
\[R^\perp(X,Y)\xi_\alpha=0,\ \forall X,Y\in\Gamma(TM),\ \alpha=1,2,3.\]
\end{lem}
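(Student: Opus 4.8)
The plan is to treat the two ambient structures separately, after first recording how $\xi_\alpha$ behaves under the normal connection. Writing the Weingarten formula \eqref{14} for the normal field $\xi_\alpha$,
\[
\overline{\nabla}_X\xi_\alpha=-A_{\xi_\alpha}X+\nabla^\perp_X\xi_\alpha,
\]
I would compare its tangential and normal parts with the structure identities. In the mixed 3-cosymplectic case \eqref{11} gives $\overline{\nabla}_X\xi_\alpha=0$, so both $A_{\xi_\alpha}=0$ and $\nabla^\perp_X\xi_\alpha=0$; thus $\xi_\alpha$ is parallel in the normal bundle and $R^\perp(X,Y)\xi_\alpha=0$ is immediate from the definition of $R^\perp$.

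In the mixed 3-Sasakian case \eqref{12} gives $\overline{\nabla}_X\xi_\alpha=-\varepsilon_\alpha\varphi_\alpha X$. Since $M$ is anti-invariant, $\varphi_\alpha X\in\Gamma(TM^\perp)$, so this vector is purely normal; comparing with Weingarten yields $A_{\xi_\alpha}=0$ and $\nabla^\perp_X\xi_\alpha=-\varepsilon_\alpha\varphi_\alpha X$. I would then insert this into the definition
\[
R^\perp(X,Y)\xi_\alpha=\nabla^\perp_X\nabla^\perp_Y\xi_\alpha-\nabla^\perp_Y\nabla^\perp_X\xi_\alpha-\nabla^\perp_{[X,Y]}\xi_\alpha,
\]
reducing the problem to evaluating $\nabla^\perp_X(\varphi_\alpha Y)$ for $X,Y\in\Gamma(TM)$.

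To compute $\nabla^\perp_X(\varphi_\alpha Y)$ I would take the normal part of $\overline{\nabla}_X(\varphi_\alpha Y)=(\overline{\nabla}_X\varphi_\alpha)Y+\varphi_\alpha\overline{\nabla}_X Y$, using the Sasakian identity \eqref{10} --- noting that $\eta_\alpha(Y)=0$ because $\xi_\alpha\perp TM$, so $(\overline{\nabla}_X\varphi_\alpha)Y=\tau_\alpha\overline{g}(X,Y)\xi_\alpha$ --- together with the Gauss formula \eqref{13} for $\overline{\nabla}_X Y=\nabla_X Y+h(X,Y)$. This produces three normal contributions: a term along $\xi_\alpha$ proportional to $\overline{g}(X,Y)$, the term $\varphi_\alpha\nabla_X Y$ (normal by anti-invariance), and the normal part of $\varphi_\alpha h(X,Y)$. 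Forming the antisymmetrized combination in $R^\perp(X,Y)\xi_\alpha$ and using torsion-freeness in the form $[X,Y]=\nabla_X Y-\nabla_Y X$, the $\xi_\alpha$-terms cancel by symmetry of $\overline{g}$ and the $\varphi_\alpha\nabla$-terms cancel against the $\varphi_\alpha[X,Y]$ term.

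The main obstacle is the apparently intractable term $\varphi_\alpha h(X,Y)$, whose normal part I do not want to decompose explicitly against $TM^\perp=\mathcal{D}\oplus\mathcal{D}^\perp\oplus\xi$. The key observation that resolves it is that this term enters $R^\perp$ only through the difference of $\varphi_\alpha h(X,Y)$ and $\varphi_\alpha h(Y,X)$, which vanishes because the second fundamental form is symmetric, $h(X,Y)=h(Y,X)$. Hence every contribution cancels and $R^\perp(X,Y)\xi_\alpha=0$. Alternatively, having established $A_{\xi_\alpha}=0$, one could invoke the Ricci equation: the $[A_{\xi_\alpha},A_{N'}]$ term drops, and a short computation using \eqref{10} and \eqref{12} shows that the ambient curvature satisfies $\overline{R}(X,Y)\xi_\alpha=0$ for $X,Y\in\Gamma(TM)$, giving the conclusion by non-degeneracy of $\overline{g}$ on $TM^\perp$.
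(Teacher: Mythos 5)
Your proposal is correct and follows essentially the same route as the paper: Weingarten plus \eqref{11} or \eqref{12} to get $\nabla^\perp_X\xi_\alpha$, then in the Sasakian case the normal part of $\overline{\nabla}_X(\varphi_\alpha Y)$ via \eqref{10} and Gauss, with the $\xi_\alpha$-, $\varphi_\alpha\nabla$- and $\varphi_\alpha h$-terms cancelling by symmetry of $g$, torsion-freeness, and symmetry of $h$ respectively. The Ricci-equation alternative you sketch at the end is also valid but is not what the paper does.
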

\begin{proof}
From the Weingarten formula we have for any $X\in\Gamma(TM)$ and
$\alpha=1,2,3$:
\begin{equation}\label{25}
\overline{\nabla}_X \xi_\alpha=-A_{\xi_\alpha}
X+\nabla^\perp_X\xi_\alpha.
\end{equation}
If $\overline{M}$ is mixed 3-cosymplectic, then identifying the normal
components in (\ref{11}) and (\ref{25}) we
obtain:
\[
\nabla^\perp_X\xi_\alpha=0, \forall X\in\Gamma(TM),\ \alpha=1,2,3,
\]
and the conclusion follows.

If $\overline{M}$ is mixed 3-Sasakian, from
(\ref{12}) and (\ref{25}) we obtain in a similar way
that
\begin{equation}\label{26}
\nabla^\perp_X\xi_\alpha=-\varepsilon_\alpha\varphi_\alpha X,\ \forall
X\in\Gamma(TM),\ \alpha=1,2,3.
\end{equation}

Using now the Gauss and Weingarten formulas, we get
\begin{equation}\label{26b}
(\overline{\nabla}_X \varphi_\alpha)Y=-A_{\varphi_\alpha
Y}X+\nabla^\perp_X\varphi_\alpha Y-\varphi_\alpha \nabla_XY-\varphi_\alpha
h(X,Y),
\end{equation}
for $X,Y\in\Gamma(TM)$ and $\alpha=1,2,3$.

On the other hand, from (\ref{10}) we obtain
\[
(\overline{\nabla}_X\varphi_\alpha) Y=\tau_\alpha g(X,Y)\xi_\alpha.
\]

Identifying now the normal components in the last two equations we
derive:
\begin{equation}\label{27}
\nabla^\perp_X\varphi_\alpha Y=\tau_\alpha g(X,Y)\xi_\alpha+\varphi_\alpha
\nabla_XY+(\varphi_\alpha h(X,Y))^n
\end{equation}
where  $(\varphi_\alpha h(X,Y))^n$ denotes the normal component of
$\varphi_\alpha h(X,Y)$.

Using now (\ref{26}) and (\ref{27}) we deduce
\[
\nabla^\perp_X\nabla^\perp_Y\xi_\alpha=-\varepsilon_\alpha[\tau_\alpha
g(X,Y)\xi_\alpha+\varphi_\alpha \nabla_XY+(\varphi_\alpha h(X,Y))^n]
\]
and
\[
\nabla^\perp_Y\nabla^\perp_X\xi_\alpha=-\varepsilon_\alpha[\tau_\alpha
g(Y,X)\xi_\alpha+\varphi_\alpha \nabla_YX+(\varphi_\alpha h(Y,X))^n].
\]

Finally we derive:
\begin{eqnarray*}
R^\perp(X,Y)\xi_\alpha&=&\nabla^\perp_X\nabla^\perp_Y\xi_\alpha-\nabla^\perp_Y\nabla^\perp_X\xi_\alpha-\nabla^\perp_{[X,Y]}\xi_\alpha\\
&=&\varepsilon_\alpha(\varphi_\alpha \nabla_YX-\varphi_\alpha
\nabla_XY)+\varepsilon_\alpha\varphi_\alpha[X,Y]\\
&=&0.
\end{eqnarray*}
\end{proof}

We can now prove the main result of this section: the flatness of the normal connection of an
anti-invariant submanifold in a mixed 3-Sasakian or mixed
3-cosymplectic manifold implies strong restrictions on the behavior
of the submanifold (compare with  \cite[Theorem]{YI} for totally real
submanifolds in K\"{a}hler manifolds (where flat normal connection implies flatness of the submanifold) and with
\cite[Proposition 11]{LOY} and  \cite[Corollary 2.1, page 126]{YK2}, for
anti-invariant submanifolds in Sasakian manifolds).

\begin{thm}
Let $M$ be an anti-invariant submanifold of minimal codimension in a
manifold $\overline{M}$ endowed with a metric mixed 3-structure
$((\varphi_\alpha,\xi_\alpha,\eta_\alpha)_{\alpha=\overline{1,3}},\overline{g})$,
such that the structure vector fields are normal to $M$.

$(i)$ If
$(\overline{M},(\varphi_\alpha,\xi_\alpha,\eta_\alpha)_{\alpha=\overline{1,3}},\overline{g})$
is a mixed 3-cosymplectic manifold, then $R^\perp\equiv0$ if and only if $R\equiv0$.

$(ii)$ If
$(\overline{M},(\varphi_\alpha,\xi_\alpha,\eta_\alpha)_{\alpha=\overline{1,3}},\overline{g})$
is a mixed 3-Sasakian manifold, then the connection in the normal
bundle is trivial if and only if $M$ is of constant sectional
curvature $\mp 1$, according as the metric mixed 3-structure
is positive or negative, respectively.
\end{thm}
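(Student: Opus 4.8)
The plan is to run the Gauss and Ricci equations in tandem, exploiting the rigid splitting of the normal bundle that minimal codimension provides. First I would record the reductions. By Lemma~\ref{6.3} we already have $R^\perp(X,Y)\xi_\alpha=0$, so the normal curvature is controlled by its components on $\mathcal D$; and minimal codimension means precisely $\mathcal D^\perp=\{0\}$, i.e. $TM^\perp=\mathcal D_1\oplus\mathcal D_2\oplus\mathcal D_3\oplus\xi$. Next, from (\ref{12}) (resp. (\ref{11})) and the Weingarten formula together with anti-invariance one gets $A_{\xi_\alpha}=0$, hence $\overline g(h(X,Y),\xi_\alpha)=0$ and $h(X,Y)\in\Gamma(\mathcal D)$. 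Writing $C_\alpha(X,Y,Z):=\overline g(h(X,Y),\varphi_\alpha Z)$, a short computation using $\overline g(h(X,Y),\varphi_\alpha Z)=\overline g(\overline\nabla_X Y,\varphi_\alpha Z)=\overline g(\varphi_\alpha Y,h(X,Z))$ (which uses $(\overline\nabla_X\varphi_\alpha)Z\parallel\xi_\alpha$ and $\eta_\alpha|_{TM}=0$) shows that $C_\alpha$ is \emph{totally symmetric}, equivalently that $g(A_{\varphi_\alpha Z}X,Y)=C_\alpha(X,Y,Z)$ is symmetric in all three entries. Since $R^\perp$ is skew-adjoint, the theorem thus reduces to analysing $R^\perp(X,Y,\varphi_\alpha Z,\varphi_\beta W)$ for $X,Y,Z,W\in\Gamma(TM)$ and $\alpha,\beta\in\{1,2,3\}$.

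Second, I would compute the relevant ambient curvature from the commutation (Ricci) identity for the $(1,1)$-tensor $\varphi_\alpha$, namely $\overline R(X,Y)\varphi_\alpha Z-\varphi_\alpha\overline R(X,Y)Z=(\overline\nabla^2_{X,Y}\varphi_\alpha-\overline\nabla^2_{Y,X}\varphi_\alpha)Z$, feeding in (\ref{10}) (resp. (\ref{9})) and $\overline\nabla_X\xi_\alpha=-\varepsilon_\alpha\varphi_\alpha X$. In the mixed $3$-Sasakian case this yields, after pairing with $\varphi_\alpha W$ and using (\ref{7})--(\ref{8}),
\[
\overline g(\overline R(X,Y)\varphi_\alpha Z,\varphi_\alpha W)=\tau_\alpha\,\overline g(\overline R(X,Y)Z,W)-\varepsilon_\alpha\big[g(Y,Z)g(X,W)-g(X,Z)g(Y,W)\big],
\]
while the mixed $3$-cosymplectic case (where $\overline\nabla\varphi_\alpha=0$) gives the same identity without the last bracket. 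The key numerical observation is that $\kappa:=\tau_\alpha\varepsilon_\alpha$ equals $-1$ for a positive and $+1$ for a negative structure, \emph{independently of} $\alpha$, which is precisely the value $\mp1$ in the statement.

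Third, I would substitute the Gauss equation $\overline g(\overline R(X,Y)Z,W)=g(R(X,Y)Z,W)+\overline g(h(X,W),h(Y,Z))-\overline g(h(X,Z),h(Y,W))$ into the above and feed everything into the Ricci equation $R^\perp(X,Y,N,N')=\overline R(X,Y,N,N')-g([A_N,A_{N'}]X,Y)$. Expanding the shape-operator commutators through $g(A_{\varphi_\alpha Z}\cdot,\cdot)=C_\alpha(\cdot,\cdot,Z)$ turns them into quadratic expressions in the totally symmetric forms $C_\mu$; the diagonal ($\beta=\alpha$) commutator reproduces exactly the $\mathcal D_\alpha$-part of the Gauss quadratic term, and the off-diagonal ($\beta\neq\alpha$) equations, in which the mixed ambient curvature is governed by the inclusions $\varphi_\alpha\mathcal D_\beta\subset\mathcal D_\gamma$, account for the remaining cross parts. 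The heart of the argument---and the step I expect to be the main obstacle---is to show that, in minimal codimension and using the total symmetry of the $C_\mu$ together with the identities (\ref{6}), all of these quadratic-in-$h$ contributions cancel, so that one is left with the clean relations
\[
R^\perp(X,Y,\varphi_\alpha Z,\varphi_\alpha W)=\tau_\alpha\big[g(R(X,Y)Z,W)-\kappa\big(g(Y,Z)g(X,W)-g(X,Z)g(Y,W)\big)\big]
\]
in the Sasakian case, and $R^\perp(X,Y,\varphi_\alpha Z,\varphi_\alpha W)=\tau_\alpha\,g(R(X,Y)Z,W)$ in the cosymplectic case, with the off-diagonal components vanishing identically. (The same cancellation, and the transparent appearance of the constant term $-\kappa$, can alternatively be read off by computing $R^\perp(X,Y)\varphi_\alpha Z$ directly from (\ref{26})--(\ref{27}).)

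Finally, the conclusion is immediate from these identities and the fact that $\varphi_\alpha|_{TM}$ is injective with image $\mathcal D_\alpha$. In the cosymplectic case $R^\perp\equiv0$ forces $g(R(X,Y)Z,W)=0$ for all tangent arguments, i.e. $R\equiv0$, and conversely; this is part~$(i)$. In the Sasakian case $R^\perp\equiv0$ is equivalent to $g(R(X,Y)Z,W)=\kappa\big(g(Y,Z)g(X,W)-g(X,Z)g(Y,W)\big)$, i.e. to $M$ having constant sectional curvature $\kappa=\mp1$ according as the structure is positive or negative; this is part~$(ii)$. For the reverse implications one inserts the curvature hypothesis ($R=0$, resp. constant curvature $\kappa$) into the same identities and checks that every component of $R^\perp$, including the off-diagonal ones and the $\xi$-components of Lemma~\ref{6.3}, vanishes.
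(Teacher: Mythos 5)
Your preliminary reductions ($\mathcal{D}^\perp=\{0\}$, $A_{\xi_\alpha}=0$, $h(X,Y)\in\Gamma(\mathcal{D})$, total symmetry of $C_\alpha$) are correct, and your route --- Ricci equation plus Gauss equation plus the cubic forms $C_\mu$ --- is genuinely different from the paper's, which instead computes $\nabla^\perp_X\varphi_\alpha Y$ explicitly (formulas (\ref{27}), (\ref{30}), (\ref{31})) and iterates to obtain the key identity (\ref{32}) for $R^\perp(X,Y)\varphi_\alpha Z$ directly. However, the step you yourself flag as ``the main obstacle'' is a genuine gap, and the mechanism you sketch for it does not work. Write $h=h_1+h_2+h_3$ with $h_\mu\in\Gamma(\mathcal{D}_\mu)$. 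Since the $\mathcal{D}_\mu$ are mutually orthogonal, (\ref{15}) gives $g(A_{\varphi_\alpha Z}X,Y)=\overline g(h(X,Y),\varphi_\alpha Z)=\overline g(h_\alpha(X,Y),\varphi_\alpha Z)$: the shape operator $A_{\varphi_\alpha Z}$ sees only the $\mathcal{D}_\alpha$-component of $h$. Consequently the commutator $g([A_{\varphi_\alpha Z},A_{\varphi_\alpha W}]X,Y)$ in the Ricci equation cancels only the $\mathcal{D}_\alpha$-part of the Gauss quadratic term $\overline g(h(X,Z),h(Y,W))-\overline g(h(Y,Z),h(X,W))$; the $\mathcal{D}_\beta$- and $\mathcal{D}_\gamma$-parts survive, in the form $\tau_\alpha\tau_\beta\,g([A_{\varphi_\beta W},A_{\varphi_\beta Z}]X,Y)+\tau_\alpha\tau_\gamma\,g([A_{\varphi_\gamma W},A_{\varphi_\gamma Z}]X,Y)$, and total symmetry of the $C_\mu$ alone does not annihilate them. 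Your suggestion that the off-diagonal ($\beta\neq\alpha$) Ricci equations ``account for the remaining cross parts'' cannot be right as stated: those are different components of $R^\perp$ and cannot absorb leftover terms sitting in the $(\alpha,\alpha)$-component. Likewise, the claimed identical vanishing of $R^\perp(X,Y,\varphi_\alpha Z,\varphi_\beta W)$ for $\alpha\neq\beta$ is asserted, not derived.

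For comparison, the paper concentrates exactly this difficulty into one statement: from $h(X,Y)\in\Gamma(\mathcal{D})$ it asserts (\ref{29}), i.e.\ that $\varphi_\alpha h(X,Y)$ is tangent, so that $(\varphi_\alpha h(X,Y))^n=0$ in (\ref{27}); this yields (\ref{31}), and iterating (\ref{26}) and (\ref{31}) produces (\ref{32}) with no second-fundamental-form contributions at all. In your language, (\ref{31}) is precisely the claim that all quadratic-in-$h$ terms disappear from $R^\perp(X,Y)\varphi_\alpha Z$ --- note that your own decomposition, via $\varphi_\alpha\mathcal{D}_\beta\subset\mathcal{D}_\gamma$ from (\ref{6}), makes visible that the components $h_\beta,h_\gamma$ are the ones requiring an argument. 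This is the one computation your proposal must supply and does not; your parenthetical ``alternative'' of computing from (\ref{26})--(\ref{27}) is not an alternative but is the paper's proof itself. The remaining ingredients of your write-up --- the ambient commutation identity with the constant $\kappa=\varepsilon_\alpha\tau_\alpha=\mp1$, the use of Lemma \ref{6.3} for the $\xi$-components, the injectivity of $\varphi_\alpha|_{TM}$, and the two-way equivalence with flatness (case $(i)$) or constant sectional curvature (case $(ii)$) --- are correct and agree with the paper's conclusion from (\ref{32}).
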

\begin{proof}
If the dimension of $\overline{M}$ is $(4m+3)$, since the
submanifold $M$ is of minimal codimension, then from Lemma
\ref{3.3} it follows that the dimension of $M$ is $m$ and so
 $\mathcal{D}^\perp=\{0\}$. Therefore we have the orthogonal
decomposition
\[TM^\perp=\mathcal{D}\oplus\xi.\]

On the other hand, identifying the tangential components, from
(\ref{11}) and (\ref{25}) - if $\overline{M}$ is a mixed
3-cosymplectic manifold, or from (\ref{12}) and (\ref{25}) - if
$\overline{M}$ is mixed 3-Sasakian manifold, it follows that
\begin{equation}\label{28}
A_{\xi_\alpha}X=0,\ \forall X\in\Gamma(TM),\ \alpha=1,2,3.
\end{equation}

From (\ref{15}) and (\ref{28}) we can deduce
\[
h(X,Y)\in\Gamma(\mathcal{D}),\ \forall X,Y\in\Gamma(TM)
\]
and so we have
\begin{equation}\label{29}
\varphi_\alpha h(X,Y)\in\Gamma(TM),\ \forall X,Y\in\Gamma(TM),\
\alpha=1,2,3.
\end{equation}
Suppose now that $\overline{M}$ is mixed 3-cosymplectic. Then,
taking into account (\ref{9}) and (\ref{29}) in (\ref{26b}) and
equating the normal components, we obtain
\begin{equation}\label{30}
\nabla^\perp_X\varphi_\alpha Y=\varphi_\alpha \nabla_XY,\ \forall
X,Y\in\Gamma(TM),\ \alpha=1,2,3.
\end{equation}

Using now (\ref{30}) we obtain for all $X,Y,Z\in\Gamma(TM)$ and
$\alpha\in\{1,2,3\}$:
\begin{eqnarray*}
R^\perp(X,Y)\varphi_\alpha Z&=&\nabla^\perp_X\nabla^\perp_Y\varphi_\alpha
Z-
\nabla^\perp_Y\nabla^\perp_X\varphi_\alpha Z-\nabla^\perp_{[X,Y]}\varphi_\alpha Z\\
&=&\nabla^\perp_X(\varphi_\alpha\nabla_Y Z)-\nabla^\perp_Y(\varphi_\alpha\nabla_X Z)-\varphi_\alpha \nabla_{[X,Y]}Z\\
&=&\varphi_\alpha\nabla_X\nabla_Y Z-\varphi_\alpha\nabla_Y\nabla_X Z-\varphi_\alpha \nabla_{[X,Y]}Z\\
&=&\varphi_\alpha R(X,Y)Z
\end{eqnarray*}
and $(i)$ follows from the above equation and
Lemma \ref{6.3}.

For $(ii)$, let $\overline{M}$ be  mixed 3-Sasakian. Then from
(\ref{27}) and (\ref{29}) we deduce that we have for all
$X,Y\in\Gamma(TM)$ and $\alpha\in\{1,2,3\}$:
\begin{equation}\label{31}
\nabla^\perp_X\varphi_\alpha Y=\tau_\alpha g(X,Y)\xi_\alpha+\varphi_\alpha
\nabla_XY.
\end{equation}

Using now (\ref{26}) and (\ref{31}) we derive
\begin{eqnarray*}
R^\perp(X,Y)\varphi_\alpha Z&=&\nabla^\perp_X\nabla^\perp_Y\varphi_\alpha
Z-
\nabla^\perp_Y\nabla^\perp_X\varphi_\alpha Z-\nabla^\perp_{[X,Y]}\varphi_\alpha Z\\
&=&\nabla^\perp_X[\tau_\alpha g(Y,Z)\xi_\alpha+\varphi_\alpha
\nabla_YZ]\\&&-\nabla^\perp_Y[\tau_\alpha
g(X,Z)\xi_\alpha+\varphi_\alpha
\nabla_XZ]\\
&&-[\tau_\alpha g([X,Y],Z)\xi_\alpha+\varphi_\alpha
\nabla_{[X,Y]}Z]\\
&=&\tau_\alpha X g(Y,Z)\xi_\alpha-\varepsilon_\alpha\tau_\alpha
g(Y,Z)\varphi_\alpha X+\tau_\alpha
g(X,\nabla_YZ) \xi_\alpha \\
&&-\tau_\alpha Y g(X,Z)\xi_\alpha+\varepsilon_\alpha\tau_\alpha
g(X,Z)\varphi_\alpha Y-\tau_\alpha
g(Y,\nabla_XZ) \xi_\alpha \\
&&+\varphi_\alpha \nabla_X\nabla_Y Z-\varphi_\alpha \nabla_Y\nabla_X Z\\
&&-\tau_\alpha g([X,Y],Z)\xi_\alpha-\varphi_\alpha \nabla_{[X,Y]}Z)\\
&=&\varphi_\alpha R(X,Y)Z-\varepsilon_\alpha\tau_\alpha[g(Y,Z)\varphi_\alpha
X-g(X,Z)\varphi_\alpha Y]\\
&&+\tau_\alpha[X g(Y,Z)-Y
g(X,Z)+g(X,\nabla_YZ)\\&&-g(Y,\nabla_XZ)-g(\nabla_XY,Z)+g(\nabla_YX,Z)]\xi_\alpha.
\end{eqnarray*}

Therefore, as $\nabla$ is a Riemannian
connection, we deduce
\begin{equation}\label{32}
R^\perp(X,Y)\varphi_\alpha Z=\varphi_\alpha
R(X,Y)Z-\varepsilon_\alpha\tau_\alpha[g(Y,Z)\varphi_\alpha
X-g(X,Z)\varphi_\alpha Y]
\end{equation}
for all $X,Y,Z\in\Gamma(TM)$ and $\alpha\in\{1,2,3\}$.

If the connection of the normal bundle is trivial, \emph{i.e.}
$R^\perp\equiv0$, then from (\ref{32}) we obtain that $M$ has
constant sectional curvature $\varepsilon_\alpha\tau_\alpha$. The
conclusion follows now taking into account that
$\varepsilon_\alpha\tau_\alpha=-1$ if the metric mixed 3-structure
is positive, respectively $\varepsilon_\alpha\tau_\alpha=1$ if the
metric mixed 3-structure
is negative.

Conversely, if $M$ is of constant sectional curvature $\mp 1$,
according as the metric mixed 3-structure
is positive or negative, then from (\ref{32}) we obtain
\[
R^\perp(X,Y)\varphi_\alpha Z=0,\ \forall X,Y,Z\in\Gamma(TM),\
\alpha=1,2,3.
\]

On the other hand,  from Lemma \ref{6.3} we see that the
curvature tensor of the normal bundle annihilates the structure
vector fields. Therefore $R^\perp\equiv0$, \emph{i.e.} the connection in
the normal bundle is trivial.
\end{proof}

\subsection{An example of an anti-invariant submanifold $M$ of minimal codimension in a
mixed 3-Sasakian manifold $\overline{M}$, such that the structure
vector fields are normal to $M$.}\hfill

 Let $H=\{J_1,J_2,J_3\}$ be the almost
para-hypercomplex structure on $\mathbb{R}^{4n+4}_{2n+2}$, given by
\[
J_1((x_i)_{i=\overline{1,4n+4}})=
(-x_{4n+3},x_{4n+4},-x_{4n+1},x_{4n+2},...,-x_3,x_4,-x_1,x_2),
\]
\[ J_2((x_i)_{i=\overline{1,4n+4}})=
(x_{4n+4},x_{4n+3},x_{4n+2},x_{4n+1},...,x_4,x_3,x_2,x_1),
\]
\[
J_3((x_i)_{i=\overline{1,4n+4}})=(-x_2,x_1,-x_4,x_3,...,-
x_{4n+2},x_{4n+1},-x_{4n+4},x_{4n+3}).
\]

It is easily checked that the semi-Riemannian metric
\[
        \overline{g}((x_i)_{i=\overline{1,4n+4}},(y_i)_{i=\overline{1,4n+4}})=
        -\sum_{i=1}^{2n+2}x_iy_i+\sum_{i=2n+3}^{4n+4}x_iy_i
\]
is adapted to the almost para-hypercomplex structure $H$ given
above.

Let $S^{4n+3}_{2n+1}$ be the unit pseudo-sphere  with standard
positive mixed 3-Sasakian structure
$((\varphi_\alpha,\xi_\alpha,\eta_\alpha)_{\alpha=\overline{1,3}},g)$.
This structure is obtained by taking $S^{4n+3}_{2n+1}$ as
hypersurface of $(\mathbb{R}^{4n+4}_{2n+2},\overline{g})$ (see
\cite{IVV}). Let  $T^n$ be the $n$-dimensional real torus
$\underbrace{S^1\times \ldots \times S^1}_{n}$, where $S^1$ is the
unit circle. We can construct a minimal isometric immersion
$f:T^n\rightarrow S^{4n+3}_{2n+1},$ defined by
\[
f(u_1,\ldots,u_n)=\frac{1}{\sqrt{n+1}}(\underbrace{0,\ldots,0}_{2n+2},
\cos x_1, \sin x_1, \ldots,\cos x_n, \sin x_n, \cos x_{n+1}, \sin
x_{n+1}),
\]
where
\[
x_{n+1}=-\sum_{i=1}^{n}x_i,\ u_1=(\cos x_1,\sin x_1),\ldots,
u_n=(\cos x_n,\sin x_n).
\]

The tangent space is spanned by $\{X_1,\ldots,X_n\}$,
where:
\[
X_1=\frac{1}{\sqrt{n+1}}(\underbrace{0,\ldots,0}_{2n+2}, -\sin x_1,
\cos x_1,\underbrace{0, \ldots,0}_{2n-2}, \sin x_{n+1}, -\cos x_{n+1}),
\]
\[
X_2=\frac{1}{\sqrt{n+1}}(\underbrace{0,\ldots,0}_{2n+4}, -\sin x_2,
\cos x_2,\underbrace{0, \ldots,0}_{2n-4}, \sin x_{n+1}, -\cos x_{n+1}),
\]
\[\vdots\]
\[
X_n=\frac{1}{\sqrt{n+1}}(\underbrace{0,\ldots,0}_{4n},-\sin x_n, \cos
x_n, \sin x_{n+1}, -\cos x_{n+1}).
\]
On the other hand, the position vector of $T^n$ in
$\mathbb{R}^{4n+4}_{2n+2}$ has components
\[
N=\frac{1}{\sqrt{n+1}}(\underbrace{0,\ldots,0}_{2n+2}, \cos x_1, \sin
x_1, \ldots,\cos x_n, \sin x_n, \cos x_{n+1}, \sin x_{n+1})
\]
and it is an outward unit spacelike normal vector field of the
pseudo-sphere in $\mathbb{R}^{4n+4}_{2n+2}$. Therefore the structure
vector fields $\xi_1,\xi_2,\xi_3$ of $S^{4n+3}_{2n+1}$ restricted to
$T^n$ are given by
\[
\xi_1=\frac{1}{\sqrt{n+1}}(\cos x_{n+1},-\sin x_{n+1},\cos x_n, -\sin
x_n,\ldots,\cos x_1, -\sin x_1,\underbrace{0,\ldots,0}_{2n+2}),
\]
\[
\xi_2=\frac{1}{\sqrt{n+1}}(-\sin x_{n+1},-\cos x_{n+1},-\sin x_n,-\cos
x_n,\ldots,-\sin x_1,-\cos x_1,\underbrace{0,\ldots,0}_{2n+2}),
\]
\[
\xi_3=\frac{1}{\sqrt{n+1}}(\underbrace{0,\ldots,0}_{2n+2}, \sin
x_1,-\cos x_1, \ldots, \sin x_n,-\cos x_n,  \sin x_{n+1},-\cos x_{n+1}).
\]

Finally, as the structure tensors
$(\varphi_\alpha,\xi_\alpha,\eta_\alpha)_{\alpha=\overline{1,3}}$ of
$S^{4n+3}_{2n+1}$  satisfy
\[
\varphi_\alpha X_i=J_\alpha X_i-\varepsilon_\alpha\eta_\alpha(X_i)N,
\]
and
\[
\eta_\alpha (X_i)=\varepsilon_\alpha\overline{g}(X_i,\xi_\alpha)=0,
\]
for all $i\in\{1,2,\ldots,n\}$ and $\alpha\in\{1,2,3\}$, we conclude
that the immersion $f$ provides a non-trivial example of an
anti-invariant flat minimal submanifold of $S^{4n+3}_{2n+1}$, normal
to the structure vector fields.

\section{Invariant submanifolds of manifolds endowed with metric mixed 3-structures, tangent to the structure vector fields}

Let $(M,g)$ be an invariant submanifold of a manifold endowed with a
metric mixed 3-structure
$(\overline{M},(\varphi_\alpha,\xi_\alpha,\eta_\alpha)_{\alpha=\overline{1,3}},\overline{g})$,
tangent to the structure vector fields $\xi_1,\xi_2,\xi_3$. As above, let $\xi=\{\xi_1\}\oplus\{\xi_2\}\oplus\{\xi_3\}$ and let
$\mathcal{D}$ be the orthogonal complementary distribution to $\xi$
in $TM$. Then we can state the following:
\begin{lem}\label{4.1}
$(i)$ $\varphi_\alpha(T_pM^\perp)\subset T_pM^\perp,\ \forall p\in M,\
\alpha=1,2,3$.

$(ii)$ The distribution $\mathcal{D}$ is invariant
under the action of $\varphi_\alpha$, $\alpha=1,2,3$.
\end{lem}
\begin{proof}
$(i)$ For any $N\in T_pM^\perp$ and $X\in T_pM$, taking account of
(\ref{8}) we obtain:
\[
\overline{g}(\varphi_\alpha N,X)=-g(N,\varphi_\alpha X)=0,
\]
since $M$ is an invariant submanifold.

$(ii)$ For any $X\in\Gamma(\mathcal{D})$, using (\ref{2}) and (\ref{8})
we obtain:
\[
g(\varphi_\alpha X,\xi_\alpha)=-g(X,\varphi_\alpha \xi_\alpha)=0
\]
for $\alpha=1,2,3$.

Similarly, making use of (\ref{4}) and (\ref{8}), we deduce:
\[
g(\varphi_\alpha X,\xi_\beta)=-g(X,\varphi_\alpha \xi_\beta)=-\tau_\beta
g(X,\xi_\gamma)=0
\]
for any even permutation $(\alpha,\beta,\gamma)$ of $(1,2,3)$.
\end{proof}

\begin{prop}\label{4.2}
Let $(M,g)$ be an invariant submanifold of a manifold endowed with a
metric mixed 3-structure
$(\overline{M},(\varphi_\alpha,\xi_\alpha,\eta_\alpha)_{\alpha=\overline{1,3}},\overline{g})$,
such that the structure vector fields $\xi_1,\xi_2,\xi_3$ are
tangent to $M$. If $\overline{M}$ is mixed 3-cosymplectic or mixed
3-Sasakian, then $M$ is mixed 3-cosymplectic and totally
geodesic, respectively mixed 3-Sasakian and totally
geodesic.
\end{prop}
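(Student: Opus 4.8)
The plan is to show that the induced structure $((\varphi_\alpha,\xi_\alpha,\eta_\alpha)_{\alpha=\overline{1,3}},g)$ on $M$ is again a metric mixed 3-structure, and then verify that the defining differential condition (\ref{9}) or (\ref{10}) is inherited. First I would confirm that the algebraic identities (\ref{1})--(\ref{8}) descend to $M$: since $M$ is invariant and tangent to all three $\xi_\alpha$ (which is the only possibility by the earlier Proposition), each $\varphi_\alpha$ restricts to an endomorphism of $TM$ by Lemma \ref{4.1}$(i)$, the $\xi_\alpha$ lie in $TM$, and $g$ is the restriction of $\overline g$. Because relations (\ref{1})--(\ref{7}) are pointwise tensorial identities holding for all vector fields on $\overline M$, in particular for those tangent to $M$, the restricted triple satisfies exactly the same identities, so $M$ carries an induced metric mixed 3-structure of the same type (positive or negative).

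The main work is the differential part, and the key tool is the Gauss formula (\ref{13}) together with the observation that for an invariant submanifold tangent to the structure vector fields the second fundamental form is compatible with $\varphi_\alpha$. In the cosymplectic case I would start from $\overline\nabla\varphi_\alpha=0$ and apply the ambient covariant derivative to $\varphi_\alpha Y$ for $X,Y\in\Gamma(TM)$. Writing $\overline\nabla_X(\varphi_\alpha Y)$ via (\ref{13}) and expanding $(\overline\nabla_X\varphi_\alpha)Y=\overline\nabla_X(\varphi_\alpha Y)-\varphi_\alpha\overline\nabla_X Y$, then substituting the Gauss formula on both $\overline\nabla_X(\varphi_\alpha Y)$ and $\overline\nabla_X Y$, the vanishing of $(\overline\nabla_X\varphi_\alpha)Y$ forces, upon separating tangential and normal parts, both $(\nabla_X\varphi_\alpha)Y=0$ and the relation $h(X,\varphi_\alpha Y)=\varphi_\alpha h(X,Y)$. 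The tangential equation is exactly (\ref{9}) for the induced connection $\nabla$, so $M$ is mixed 3-cosymplectic. In the Sasakian case I would do the same expansion but now equate the tangential parts of (\ref{10}); since $g(X,Y)\xi_\alpha-\varepsilon_\alpha\eta_\alpha(Y)X$ is already tangent to $M$, the tangential component reproduces (\ref{10}) for $\nabla$, giving that $M$ is mixed 3-Sasakian.

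Finally, for total geodesy I expect the cleanest route is through the structure vector fields rather than through a direct analysis of $h$. In the cosymplectic case, (\ref{11}) gives $\overline\nabla_X\xi_\alpha=0$, so by (\ref{13}) the normal part $h(X,\xi_\alpha)=0$; in the Sasakian case (\ref{12}) gives $\overline\nabla_X\xi_\alpha=-\varepsilon_\alpha\varphi_\alpha X$, whose normal part is zero by Lemma \ref{4.1}$(i)$ (indeed $\varphi_\alpha X\in TM$ since $M$ is invariant), so again $h(X,\xi_\alpha)=0$. This only shows $h$ vanishes when one slot is a structure field; the remaining step is to upgrade this to $h\equiv0$ on all of $TM$. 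Here I would exploit the invariance: for arbitrary $X\in\Gamma(\mathcal{D})$ one has $X=\tau_\alpha\varphi_\alpha^2 X$ (up to the $\eta$-correction, which vanishes on $\mathcal D$), so writing $h(X,Y)$ in terms of $h(\varphi_\alpha\,\cdot,\varphi_\alpha\,\cdot)$ and using the compatibility $h(X,\varphi_\alpha Y)=\varphi_\alpha h(X,Y)$ derived above together with the symmetry of $h$ should force a sign contradiction unless $h$ vanishes, exactly as in the classical Riemannian proof that invariant submanifolds of Sasakian (respectively cosymplectic) space forms are totally geodesic. I anticipate that this last algebraic manipulation with $h$ and the $\varphi_\alpha$ is the main obstacle, since one must handle the signs $\tau_\alpha,\varepsilon_\alpha$ carefully and make sure the mixed (para/complex) signs do not conspire to make the argument degenerate.
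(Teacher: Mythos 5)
Your treatment of the inherited structure and of the differential conditions is exactly the paper's: apply the Gauss formula to $(\overline\nabla_X\varphi_\alpha)Y$, separate tangential and normal parts, and read off $(\nabla_X\varphi_\alpha)Y=0$ (resp.\ the Sasakian identity) together with $h(X,\varphi_\alpha Y)=\varphi_\alpha h(X,Y)$. Up to that point the proposal is correct and matches the paper.

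The gap is in the last step, where you claim total geodesy should follow by writing $X=-\tau_\alpha\varphi_\alpha^2X$ on $\mathcal D$ and shuffling a \emph{single} $\varphi_\alpha$ through $h$ ``as in the classical proof.'' That computation produces no contradiction: from $h(X,\varphi_\alpha Y)=\varphi_\alpha h(X,Y)$ and the symmetry of $h$ one only gets $h(\varphi_\alpha X,\varphi_\alpha Y)=\varphi_\alpha^2h(X,Y)=-\tau_\alpha h(X,Y)$ (the $\eta_\alpha(h(X,Y))$ term vanishes because $h(X,Y)$ is normal and $\xi_\alpha$ is tangent), which is consistent for every value of $\tau_\alpha$. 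Indeed, in the genuinely classical situation of a single Sasakian structure, invariant submanifolds satisfying $h(X,\varphi Y)=\varphi h(X,Y)$ are merely minimal, not totally geodesic, so no one-structure argument can close the proof. What forces $h\equiv0$ here is the interaction of the \emph{three} structures: using (\ref{6}) on the normal vector $h(X,Y)$ one has $\varphi_1h(X,Y)=\tau_1\varphi_2\varphi_3h(X,Y)$; pushing $\varphi_3$ and then $\varphi_2$ inside $h$ one at a time (using symmetry of $h$ at each stage) and then pulling them back out in the opposite order reassembles the composition as $\tau_1\varphi_3\varphi_2h(X,Y)=-\varphi_1h(X,Y)$, because $\varphi_2$ and $\varphi_3$ anticommute on the normal bundle. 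Hence $\varphi_1h(X,Y)=-\varphi_1h(X,Y)$, so $\varphi_1h(X,Y)=0$, and applying $\varphi_1$ once more gives $h(X,Y)=0$. Your observation that $h(X,\xi_\alpha)=0$ is correct but is not needed once this is in place. So the architecture of your proof is right, but the decisive sign cancellation requires the anticommutation relation between two distinct structure tensors, not the square of one of them.
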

\begin{proof}
Gauss equation implies:
\begin{equation}\label{19}
(\overline{\nabla}_X\varphi_\alpha)Y=(\nabla_X\varphi_\alpha)Y+h(X,\varphi_\alpha
Y)-\varphi_\alpha h(X,Y)
\end{equation}
for all $X,Y\in\Gamma(TM)$.

If $\overline{M}$ is a mixed 3-cosymplectic manifold, then from
(\ref{9}) and (\ref{19}) we deduce:
\[
(\nabla_X\varphi_\alpha)Y+h(X,\varphi_\alpha Y)-\varphi_\alpha h(X,Y)=0
\]
and equating the normal and the tangential components we find
\begin{equation}\label{20}
(\nabla_X\varphi_\alpha)Y=0
\end{equation}
and
\[
h(X,\varphi_\alpha Y)=\varphi_\alpha h(X,Y),\ \alpha=1,2,3.
\]

From (\ref{20}) it follows that the induced metric mixed 3-structure
on $M$ is mixed 3-cosymplectic.

If $\overline{M}$ is a mixed 3-Sasakian manifold, then from
(\ref{10}) and (\ref{19}) we deduce:
\[
(\nabla_X\varphi_\alpha)Y+h(X,\varphi_\alpha Y)-\varphi_\alpha
h(X,Y)=\tau_\alpha[g(X,Y)\xi_\alpha-\varepsilon_\alpha\eta_\alpha(Y)X]
\]
and equating the normal and the tangential components we find
\begin{equation}\label{21}
(\nabla_X\varphi_\alpha)Y=\tau_\alpha[g(X,Y)\xi_\alpha-\varepsilon_\alpha\eta_\alpha(Y)X]
\end{equation}
and
\[
h(X,\varphi_\alpha Y)=\varphi_\alpha h(X,Y),\ \alpha=1,2,3.
\]

From (\ref{21}) it follows that the induced metric mixed 3-structure
on $M$ is mixed 3-Sasakian.

Moreover, making use of (\ref{6}) and (\ref{8}), in both cases we
obtain
\begin{eqnarray*}
    h(X,\varphi_1 Y)&=&\varphi_1h(X,Y)=\tau_1\varphi_2\varphi_3 h(X,Y)=\tau_1\varphi_2
    h(X,\varphi_3Y)\\
    &=&\tau_1\varphi_2h(\varphi_3Y,X)=\tau_1h(\varphi_3Y,\varphi_2X)=\tau_1h(\varphi_2X,\varphi_3Y)\\
    &=&\tau_1\varphi_3h(\varphi_2X,Y)=\tau_1\varphi_3h(Y,\varphi_2X)=\tau_1\varphi_3\varphi_2h(Y,X)\\
    &=&\tau_1\varphi_3\varphi_2h(X,Y)=-\varphi_1 h(X,Y).
\end{eqnarray*}

On the other hand, since $h(X,\varphi_1 Y)=\varphi_1 h(X,Y)$, it follows
that $h(X,Y)=0$, $\forall X,Y\in\Gamma(TM)$ and therefore $M$ is a
totally geodesic submanifold of $\overline{M}$.

\end{proof}

\begin{cor}
An invariant submanifold of a mixed 3-cosymplectic or mixed
3-Sasakian manifold,
tangent to structure vector fields, has
dimension $4k+3$, $k\in\mathbb{N}$. Moreover, the induced metric has
signature $(2k+1,2k+2)$ or $(2k+2,2k+1)$, according to the
metric mixed 3-structure
 being positive or negative.
\end{cor}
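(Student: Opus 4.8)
The plan is to deduce this corollary directly from Proposition \ref{4.2}, which has just established that an invariant submanifold $M$ tangent to the structure vector fields inherits a mixed 3-cosymplectic (respectively mixed 3-Sasakian) structure. First I would invoke the discussion in Section 2, where it was shown that any manifold carrying a metric mixed 3-structure must have dimension $4k+3$ for some $k$, with the signature of the compatible metric being $(2k+1,2k+2)$ in the positive case and $(2k+2,2k+1)$ in the negative case. Since Proposition \ref{4.2} guarantees that the induced structure $((\varphi_\alpha,\xi_\alpha,\eta_\alpha)_{\alpha=\overline{1,3}},g)$ on $M$ is again a metric mixed 3-structure, the dimension and signature constraints apply verbatim to $M$.

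The only genuinely new verification is that the induced metric mixed 3-structure on $M$ has the same \emph{type} (positive or negative) as the ambient one. This is immediate: the sign $\varepsilon_\alpha=\overline{g}(\xi_\alpha,\xi_\alpha)$ is a pointwise quantity computed from the metric, and since $g$ is the restriction of $\overline{g}$ and the $\xi_\alpha$ are tangent to $M$, we have $g(\xi_\alpha,\xi_\alpha)=\overline{g}(\xi_\alpha,\xi_\alpha)=\varepsilon_\alpha$ unchanged. Thus the causal characters of $\xi_1,\xi_2,\xi_3$ are preserved under restriction, so the induced structure is positive exactly when $\overline{M}$ is positive.

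With these two observations in hand, I would finish by simply quoting the dimension-and-signature count from Section 2 as applied to the submanifold $M$ with its induced metric mixed 3-structure. I do not anticipate any real obstacle here; the entire content of the corollary is already packaged in Proposition \ref{4.2} together with the structural facts about metric mixed 3-structures derived in the preliminaries. The proof is therefore a short citation of prior results rather than a fresh computation, and the only point requiring a word of care is the preservation of the sign $\varepsilon_\alpha$ under restriction, which I would state explicitly to justify the positive/negative dichotomy.
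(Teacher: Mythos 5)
Your proposal is correct and matches the paper's (implicit) argument: the corollary is stated without a separate proof precisely because it follows from Proposition \ref{4.2} together with the frame-field dimension and signature count for metric mixed 3-structures given in Section 2. Your extra remark that $\varepsilon_\alpha$ is preserved under restriction of the metric is a worthwhile explicit justification of the positive/negative dichotomy, but it does not change the route.
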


\begin{cor}
An invariant submanifold of $\mathbb{R}^{4n+3}_{2n+1}$,
$\mathbb{R}^{4n+3}_{2n+2}$, $S^{4n+3}_{2n+1}$, $S^{4n+3}_{2n+2}$,
$P^{4n+3}_{2n+1}(\mathbb{R})$ and $P^{4n+3}_{2n+2}(\mathbb{R})$,
tangent to the structure vector fields, is locally
isometric with $\mathbb{R}^{4k+3}_{2k+1}$,
$\mathbb{R}^{4k+3}_{2k+2}$, $S^{4k+3}_{2k+1}$, $S^{4k+3}_{2k+2}$,
$P^{4k+3}_{2k+1}(\mathbb{R})$ and $P^{4k+3}_{2k+2}(\mathbb{R})$
respectively, where $0\leq k\leq n$.
\end{cor}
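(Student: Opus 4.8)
The plan is to derive the statement from two results already proved, together with one classical fact about pseudo-Riemannian space forms. First I would recall that, by Proposition \ref{4.2}, any invariant submanifold $M$ tangent to the structure vector fields is totally geodesic and carries the induced mixed 3-cosymplectic (respectively mixed 3-Sasakian) structure, and that, by the preceding Corollary, $\dim M=4k+3$ with signature $(2k+1,2k+2)$ or $(2k+2,2k+1)$ according as the structure is positive or negative. Thus the dimension and index of $M$ are already fixed, and only its curvature type remains to be identified.

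Since $M$ is totally geodesic, $h\equiv 0$, so the Gauss equation collapses to $R(X,Y,Z,W)=\overline{R}(X,Y,Z,W)$ for all $X,Y,Z,W\in\Gamma(TM)$: the intrinsic curvature of $M$ is just the ambient curvature restricted to $TM$. Next I would use that each of the six listed ambients is in fact a pseudo-Riemannian space form: $\mathbb{R}^{4n+3}_{2n+1}$ and $\mathbb{R}^{4n+3}_{2n+2}$ are flat, the unit pseudo-spheres $S^{4n+3}_{2n+1}$, $S^{4n+3}_{2n+2}$ have constant sectional curvature, and the projective spaces $P^{4n+3}_{2n+1}(\mathbb{R})$, $P^{4n+3}_{2n+2}(\mathbb{R})$, being quotients of these pseudo-spheres, are locally isometric to them with the same constant curvature. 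The key point is that this constant $c$ depends only on the positive/negative type, not on $n$; combined with the Gauss identity above, $M$ therefore has constant sectional curvature equal to that common value $c$.

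To conclude I would invoke the uniqueness of pseudo-Riemannian space forms: a connected pseudo-Riemannian manifold of constant sectional curvature $c$, of prescribed dimension and index, is locally isometric to the standard model of that curvature, dimension and index. Substituting $\dim M=4k+3$, the index furnished by the preceding Corollary, and the curvature $c$ read off from the ambient, this matches each ambient with its $(4k+3)$-dimensional analogue: the flat cases yield $\mathbb{R}^{4k+3}_{2k+1}$, $\mathbb{R}^{4k+3}_{2k+2}$, and the curvature-$c$ cases yield $S^{4k+3}_{2k+1}$, $S^{4k+3}_{2k+2}$ and likewise (only up to local isometry) $P^{4k+3}_{2k+1}(\mathbb{R})$, $P^{4k+3}_{2k+2}(\mathbb{R})$. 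The range $0\le k\le n$ is forced by $M\subset\overline{M}$.

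The step I expect to be the main obstacle is the honest verification that the \emph{particular} ambients are space forms: a general mixed 3-Sasakian manifold is only Einstein (by the Einstein theorem of Section 2), so constant sectional curvature is a special property of the model examples and must be extracted from their explicit realization as pseudo-spheres and their quotients rather than from the mixed 3-Sasakian axioms alone. Once that is in hand, matching the curvature of $M$ to the target and applying the space-form uniqueness theorem is routine, the dimension and signature bookkeeping having already been settled by the preceding Corollary.
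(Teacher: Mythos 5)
Your argument is correct and is exactly the intended one: the paper states this corollary without a separate proof because it follows immediately from Proposition \ref{4.2} (the submanifold is totally geodesic and inherits the structure), the preceding corollary (which fixes the dimension $4k+3$ and the signature), the Gauss equation with $h\equiv 0$ showing the induced metric has the same constant sectional curvature as the ambient space form, and the standard local uniqueness of pseudo-Riemannian space forms of given curvature, dimension and index. Nothing further is needed.
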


Proposition \ref{4.2} and Corollary \ref{3.7} together imply the
following result, which corresponds to a theorem of Cappelletti
Montano, Di Terlizzi and Tripathi \cite{MTT} for submanifolds in
contact $(\kappa,\mu)$-manifolds.

\begin{prop}
A non-degenerate submanifold of a mixed 3-Sasakian manifold, tangent
to the structure vector fields, is  totally geodesic  if
and only if it is invariant.
\end{prop}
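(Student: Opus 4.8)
The plan is to prove the biconditional by treating the two implications separately, invoking the results already established for each direction. The statement asserts that, for a non-degenerate submanifold $M$ of a mixed 3-Sasakian manifold tangent to the structure vector fields, being totally geodesic is equivalent to being invariant.

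For the implication ``totally geodesic $\Rightarrow$ invariant,'' I would simply cite Corollary \ref{3.7}, which already establishes exactly this: if $M$ is totally geodesic and tangent to $\xi_1,\xi_2,\xi_3$, then from \eqref{12} one has $\varphi_\alpha X=-\varepsilon_\alpha\nabla_X\xi_\alpha\in\Gamma(TM)$ for all $X\in\Gamma(TM)$, so each $\varphi_\alpha$ preserves the tangent spaces and $M$ is invariant. No further work is needed here.

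For the converse implication ``invariant $\Rightarrow$ totally geodesic,'' I would invoke Proposition \ref{4.2}, which asserts that an invariant submanifold of a mixed 3-Sasakian manifold tangent to the structure vector fields is mixed 3-Sasakian \emph{and totally geodesic}. That proposition already does the substantive computation: using the Gauss equation \eqref{19} together with the Sasakian condition \eqref{10}, one extracts $h(X,\varphi_\alpha Y)=\varphi_\alpha h(X,Y)$, and then a chain of manipulations via \eqref{6} and \eqref{8} forces $h(X,\varphi_1 Y)=-\varphi_1 h(X,Y)$, whence $h\equiv 0$. So this direction is also immediate from the cited result.

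Since both directions are already contained in Corollary \ref{3.7} and Proposition \ref{4.2}, the proof is essentially a one-line assembly: I would write that the forward implication follows from Proposition \ref{4.2} and the reverse from Corollary \ref{3.7}, which together give the equivalence. The only conceptual point worth noting is the logical pairing---one must be careful that Proposition \ref{4.2} supplies ``invariant $\Rightarrow$ totally geodesic'' while Corollary \ref{3.7} supplies ``totally geodesic $\Rightarrow$ invariant''---but there is no genuine obstacle, as neither direction requires any argument beyond quoting the prior results. The main (minor) subtlety is ensuring that the hypotheses match exactly, namely non-degeneracy and tangency to all three structure vector fields, both of which are shared by the cited statements.
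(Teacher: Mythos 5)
Your proposal is correct and matches the paper exactly: the paper gives no separate proof, stating only that Proposition \ref{4.2} and Corollary \ref{3.7} together imply the result, which is precisely your assembly of the two implications. (Only note the small slip in your final paragraph, where ``forward'' and ``reverse'' are swapped relative to your own earlier, correct pairing of the citations.)
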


\begin{rem}
The canonical immersions $S^n_\nu\hookrightarrow
S^{4n+3}_{2n+1}$, $S^n_\nu\hookrightarrow S^{4n+3}_{2n+2}$,
$P^n_\nu(\mathbb{R})\hookrightarrow P^{4n+3}_{2n+1}(\mathbb{R})$ and
$P^n_\nu(\mathbb{R})\hookrightarrow P^{4n+3}_{2n+2}(\mathbb{R})$,
where $\nu\in\{0,...,n\}$, provide very natural examples of
anti-invariant totally-geodesic submanifolds, but they are not
tangent to the structure vector fields.
\end{rem}

\begin{lem}
The
distribution $\xi$ of an invariant submanifold of a mixed 3-cosymplectic or
mixed 3-Sasakian manifold tangent to the structure vector fields is integrable.
\end{lem}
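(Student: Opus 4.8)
The plan is to establish integrability of $\xi$ in the standard way: since $\xi$ is globally framed by the tangent vector fields $\xi_1,\xi_2,\xi_3$, it suffices to check that each Lie bracket $[\xi_\alpha,\xi_\beta]$ is again a section of $\xi$. The key simplification is that, by Proposition \ref{4.2}, an invariant submanifold tangent to the structure fields inherits the ambient type: $M$ is itself mixed 3-cosymplectic (resp. mixed 3-Sasakian) and is totally geodesic. Hence the induced Levi-Civita connection $\nabla$ coincides with $\overline{\nabla}$ on tangent fields, and the structural identities \eqref{11}, \eqref{12}, and the algebraic relation \eqref{4} hold verbatim for the \emph{induced} structure $(\varphi_\alpha,\xi_\alpha,\eta_\alpha)$ on $M$. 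Since the bracket is computed intrinsically, I will work directly with these induced identities.

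In the mixed 3-cosymplectic case the argument is immediate: by \eqref{11} we have $\nabla\xi_\alpha=0$ for every $\alpha$, so
\[
[\xi_\alpha,\xi_\beta]=\nabla_{\xi_\alpha}\xi_\beta-\nabla_{\xi_\beta}\xi_\alpha=0\in\Gamma(\xi),
\]
and $\xi$ is trivially integrable (in fact the generators commute).

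In the mixed 3-Sasakian case I will use \eqref{12}, namely $\nabla_X\xi_\alpha=-\varepsilon_\alpha\varphi_\alpha X$, together with the relations $\varphi_\alpha\xi_\beta=\tau_\beta\xi_\gamma$ and $\varphi_\beta\xi_\alpha=-\tau_\alpha\xi_\gamma$ from \eqref{4}, valid for any even permutation $(\alpha,\beta,\gamma)$ of $(1,2,3)$. Substituting gives
\[
[\xi_\alpha,\xi_\beta]=-\varepsilon_\beta\varphi_\beta\xi_\alpha+\varepsilon_\alpha\varphi_\alpha\xi_\beta=(\varepsilon_\beta\tau_\alpha+\varepsilon_\alpha\tau_\beta)\,\xi_\gamma,
\]
which is a scalar multiple of $\xi_\gamma$ and therefore lies in $\Gamma(\xi)$. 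This is exactly the tangential counterpart of the computation already carried out in the proof of the integrability lemma for the anti-invariant (normal) case, where the same scalar factor appeared but was paired against a normal field orthogonal to $\xi_\gamma$.

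I do not expect a genuine obstacle here; the computation is a direct application of the inherited structure equations. The only points requiring care are bookkeeping ones: justifying that one may pass to the induced structure on $M$ (which is precisely the content of Proposition \ref{4.2}, so that \eqref{11}, \eqref{12} and \eqref{4} apply intrinsically rather than only in $\overline{M}$), and keeping the permutation conventions and the signs $\tau_\alpha$, $\varepsilon_\alpha$ consistent so that the bracket collapses to a multiple of the \emph{third} field $\xi_\gamma$ and hence remains inside the rank-three distribution $\xi$.
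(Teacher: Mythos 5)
Your proof is correct and follows essentially the same route as the paper: in both, the cosymplectic case is immediate from $\overline{\nabla}\xi_\alpha=0$, and the Sasakian case evaluates $[\xi_\alpha,\xi_\beta]$ via $\overline{\nabla}_X\xi_\alpha=-\varepsilon_\alpha\varphi_\alpha X$ together with (\ref{4}), producing the same factor $(\varepsilon_\beta\tau_\alpha+\varepsilon_\alpha\tau_\beta)$. The only cosmetic differences are that you identify the bracket explicitly as a multiple of $\xi_\gamma$ while the paper checks it is $\overline{g}$-orthogonal to $\mathcal{D}$, and that your appeal to Proposition \ref{4.2} to pass to the induced structure is harmless but unnecessary, since the bracket of tangent fields is automatically tangent and the computation can be run with the ambient connection directly.
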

\begin{proof}
If $M$ is a mixed 3-cosymplectic manifold, then from (\ref{11}) we
obtain for any $X\in\Gamma(\mathcal{D})$:
\[
\overline{g}([\xi_\alpha,\xi_\beta],X)=
\overline{g}(\overline{\nabla}_{\xi_\alpha}\xi_\beta,X)-\overline{g}(\overline{\nabla}_{\xi_\beta}\xi_\alpha,X)=0.
\]

If $M$ is a mixed 3-Sasakian manifold, then making use of (\ref{4})
and (\ref{12}) we obtain for any $X\in\Gamma(\mathcal{D})$:
\begin{eqnarray*}
\overline{g}([\xi_\alpha,\xi_\beta],X)&=&
\overline{g}(\overline{\nabla}_{\xi_\alpha}\xi_\beta,X)-\overline{g}(\overline{\nabla}_{\xi_\beta}\xi_\alpha,X)\\
&=&-\varepsilon_\beta g(\varphi_\beta\xi_\alpha,X)+\varepsilon_\alpha
g(\varphi_\alpha\xi_\beta,X)\\
&=&(\varepsilon_\beta\tau_\alpha+\varepsilon_\alpha\tau_\beta)g(\xi_\gamma,X)\\
&=&0.
\end{eqnarray*}

Therefore, in both cases it follows that the distribution $\xi$ is
integrable.
\end{proof}

\begin{prop}
Let $(M,g)$ be an invariant submanifold of a manifold $\overline{M}$
endowed with a metric mixed 3-structure, tangent to  the structure
vector fields.

$(i)$ If $\overline{M}$ is  mixed 3-cosymplectic, then the
distribution $\mathcal{D}$ is integrable. Moreover,
the leaves of the foliation are mixed 3-cosymplectic manifold, totally geodesically immersed in $\overline{M}$.

$(ii)$ If $\overline{M}$ is  mixed 3-Sasakian  and $\dim M>3 $, then the distribution
$\mathcal{D}$ is never integrable.
\end{prop}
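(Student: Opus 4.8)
The plan is to reduce both parts to a single computation of $g([X,Y],\xi_\alpha)$ for $X,Y\in\Gamma(\mathcal{D})$, exploiting Proposition \ref{4.2}, which guarantees that $M$ itself carries the induced structure (mixed $3$-cosymplectic or mixed $3$-Sasakian) and is totally geodesic in $\overline{M}$. Consequently I may work entirely inside $M$ with its Levi-Civita connection $\nabla$, using formula (\ref{11}) or (\ref{12}) for the \emph{induced} structure. Since $\mathcal{D}=\ker\eta_1\cap\ker\eta_2\cap\ker\eta_3$ inside $TM$ and each $\eta_\alpha(Z)=\varepsilon_\alpha g(Z,\xi_\alpha)$, a bracket $[X,Y]$ lies in $\mathcal{D}$ precisely when $g([X,Y],\xi_\alpha)=0$ for $\alpha=1,2,3$.

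First I would expand $g([X,Y],\xi_\alpha)=g(\nabla_X Y-\nabla_Y X,\xi_\alpha)$ by metric compatibility, using $g(Y,\xi_\alpha)=g(X,\xi_\alpha)=0$ on $\mathcal{D}$ to get $g(\nabla_X Y,\xi_\alpha)=-g(Y,\nabla_X\xi_\alpha)$. In the cosymplectic case (\ref{11}) gives $\nabla_X\xi_\alpha=0$, so the whole expression vanishes and $\mathcal{D}$ is integrable, proving the first assertion of $(i)$. For the leaves, the same identity $g(\nabla_X Y,\xi_\alpha)=0$ shows that the $\xi$-component of $\nabla_X Y$ vanishes, hence each leaf $L$ is totally geodesic in $M$ and therefore, as $M$ is totally geodesic in $\overline{M}$, in $\overline{M}$ as well. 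Restricting $(\varphi_\alpha)$ to $\mathcal{D}=TL$, relations (\ref{1}) and (\ref{6}) with the $\eta_\alpha$-terms dropped show that $\varphi_1,\varphi_2$ become almost product and $\varphi_3$ almost complex with $\varphi_1\varphi_2=\varphi_3$, i.e.\ an almost para-hypercomplex structure compatible with $g|_\mathcal{D}$; since $\overline{\nabla}\varphi_\alpha=0$ and $L$ is totally geodesic, these structures are parallel, so $L$ is para-hyper-K\"ahler.

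For $(ii)$, the same opening computation in the Sasakian case uses (\ref{12}), $\nabla_X\xi_\alpha=-\varepsilon_\alpha\varphi_\alpha X$, together with the skew-symmetry $g(\varphi_\alpha X,Y)=-g(X,\varphi_\alpha Y)$ from (\ref{8}), to yield
\[
g([X,Y],\xi_\alpha)=-2\varepsilon_\alpha\, g(X,\varphi_\alpha Y),\qquad X,Y\in\Gamma(\mathcal{D}).
\]
Thus $\mathcal{D}$ is integrable if and only if $g(X,\varphi_\alpha Y)=0$ for all $X,Y\in\Gamma(\mathcal{D})$. By Lemma \ref{4.1}$(ii)$ one has $\varphi_\alpha Y\in\mathcal{D}$, and on $\mathcal{D}$ formula (\ref{1}) reads $\varphi_\alpha^2=-\tau_\alpha\,Id$, so $\varphi_\alpha$ is invertible there and $\varphi_\alpha Y\neq0$ whenever $Y\neq0$. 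Since $\dim M>3$ forces $\dim\mathcal{D}=\dim M-3\geq4>0$ and $g$ is non-degenerate on $\mathcal{D}$, I can choose $Y\neq0$ and then $X\in\mathcal{D}$ with $g(X,\varphi_\alpha Y)\neq0$, giving $g([X,Y],\xi_\alpha)\neq0$ and hence $[X,Y]\notin\mathcal{D}$. Therefore $\mathcal{D}$ is never integrable.

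The only delicate point is this last step of $(ii)$: one must confirm that the obstruction $g(X,\varphi_\alpha Y)$ is genuinely nonzero, which is exactly where the non-degeneracy of $g|_\mathcal{D}$ and the invertibility of $\varphi_\alpha$ on $\mathcal{D}$ are essential. The invertibility fails only in the excluded case $\dim M=3$, where $\mathcal{D}=\{0\}$ and the statement is vacuous; everything else is a direct application of (\ref{1}), (\ref{6}), (\ref{8}), (\ref{11}), (\ref{12}) and Proposition \ref{4.2}.
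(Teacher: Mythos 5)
Your proof is correct, and its core is the same as the paper's: both parts reduce to the computation $\overline{g}([X,Y],\xi_\alpha)=-\overline{g}(Y,\overline{\nabla}_X\xi_\alpha)+\overline{g}(X,\overline{\nabla}_Y\xi_\alpha)$, evaluated with (\ref{11}) in the cosymplectic case and with (\ref{12}) in the Sasakian case. Most differences are cosmetic: you route everything through Proposition \ref{4.2} so as to work with the induced connection on $M$, while the paper computes directly with $\overline{\nabla}$ in $\overline{M}$ (equivalent, since $M$ is totally geodesic); and for the non-integrability in $(ii)$ the paper exhibits the explicit pair $(X,\varphi_\alpha X)$ with $X$ non-lightlike, obtaining $\overline{g}([X,\varphi_\alpha X],\xi_\alpha)=2\varepsilon_\alpha\tau_\alpha g(X,X)\neq0$, whereas you argue via non-degeneracy of $g|_{\mathcal{D}}$ and invertibility of $\varphi_\alpha$ on $\mathcal{D}$ --- both need $\dim M>3$ only to guarantee $\mathcal{D}\neq\{0\}$ and the existence of a non-null direction. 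The one genuine divergence is your description of the leaves: you prove they are para-hyper-K\"ahler, while the statement calls them ``mixed 3-cosymplectic''. Your version is the defensible one: the leaves are $4k$-dimensional and normal to $\xi_1,\xi_2,\xi_3$, so they cannot literally carry a mixed 3-structure; what the paper's own computation actually establishes is $(\nabla'\varphi_\alpha)Y=0$ together with $h'=0$ for the restricted structures, which is precisely the para-hyper-K\"ahler conclusion you draw (and which also follows from the Proposition of Section 6 applied to a leaf viewed as an invariant submanifold of $M$ normal to the structure vector fields). So treat the discrepancy as a slip in the statement rather than a gap in your argument.
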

\begin{proof}
$(i)$ If $\overline{M}$ is mixed 3-cosymplectic, then using
(\ref{11}) we obtain for any $X,Y\in\Gamma(\mathcal{D})$ and
$\alpha=1,2,3$:
\begin{eqnarray*}
\overline{g}([X,Y],\xi_\alpha)&=&
\overline{g}(\overline{\nabla}_XY,\xi_\alpha)-\overline{g}(\overline{\nabla}_YX,\xi_\alpha)\\
&=&-\overline{g}(Y,\overline{\nabla}_X\xi_\alpha)+\overline{g}(X,\overline{\nabla}_Y\xi_\alpha)\\&=&0.
\end{eqnarray*}

Therefore the distribution $\mathcal{D}$ is integrable. Let $M'$ be
a leaf of $\mathcal{D}$. Then for any $X,Y\in\Gamma(TM')$ we have:
\[
\overline{\nabla}_XY=\nabla'_XY+h'(X,Y),
\]
where $\nabla'$ is the connection induced by $\overline{\nabla}$ on
$M'$ and $h'$ is the second fundamental form of the immersion of
$M'$ in $\overline{M}$. Taking into account (\ref{11}) we obtain:
\begin{eqnarray*}
h'(X,\varphi_\alpha Y)&=&\overline{\nabla}_X\varphi_\alpha
Y-\nabla'_X\varphi_\alpha Y\\
&=&(\overline{\nabla}_X\varphi_\alpha)
Y+\varphi_\alpha\overline{\nabla}_XY-\nabla'_X\varphi_\alpha Y\\
&=&\varphi_\alpha\nabla'_XY+\varphi_\alpha h'(X,Y)-\nabla'_X\varphi_\alpha
Y\\
&=&-(\nabla'_X\varphi_\alpha)Y+\varphi_\alpha h'(X,Y).
\end{eqnarray*}

Therefore it follows $(\nabla'_X\varphi_\alpha)Y=0$ and
$h'(X,\varphi_\alpha Y)=\varphi_\alpha h'(X,Y)$, for $\alpha=1,2,3$. From
the last equality we deduce $h'=0$ and the conclusion follows.

$(ii)$ If $\overline{M}$ is a mixed 3-Sasakian manifold, then using
(\ref{8}) and (\ref{12}), we obtain for any
$X,Y\in\Gamma(\mathcal{D})$ and $\alpha=1,2,3$:
\begin{eqnarray*}
\overline{g}([X,Y],\xi_\alpha)&=&
-\overline{g}(Y,\overline{\nabla}_X\xi_\alpha)+\overline{g}(X,\overline{\nabla}_Y\xi_\alpha)\\
&=&\varepsilon_\alpha g(Y,\varphi_\alpha X)-\varepsilon_\alpha
g(X,\varphi_\alpha Y)\\&=&2\varepsilon_\alpha g(Y,\varphi_\alpha X).
\end{eqnarray*}

If we consider now $X$ to be a non-lightlike vector field, then
choosing $Y=\varphi_\alpha X$ in the last identity, we obtain using
(\ref{7}) and (\ref{8}) that we have:
\[
\overline{g}([X,\varphi_\alpha X],\xi_\alpha)=2\varepsilon_\alpha
\tau_\alpha g(X,X)\neq 0.
\]

Therefore the distribution $\mathcal{D}$ is not integrable.
\end{proof}

\section{Invariant submanifolds of manifolds endowed with metric mixed 3-structures, normal to the structure vector fields}

Let $M$ be an invariant submanifold of a manifold endowed with a
metric mixed 3-structure
$(\overline{M},(\varphi_\alpha,\xi_\alpha,\eta_\alpha)_{\alpha=\overline{1,3}},\overline{g})$,
such that the structure vector fields $\xi_1,\xi_2,\xi_3$ are normal
to $M$. We consider $\xi=\{\xi_1\}\oplus\{\xi_2\}\oplus\{\xi_3\}$
and we denote by $\mathcal{D}^\perp$ the orthogonal complementary
subbundle to $\xi$ in $TM^\perp$.

The following result is straightforward:

\begin{lem}
$(i)$ $\varphi_\alpha(T_pM^\perp)\subset T_pM^\perp,\ \forall p\in M,\
\alpha=1,2,3$.

$(ii)$ The subbundle $\mathcal{D}^\perp$ is invariant under the action
of $\varphi_\alpha$, $\alpha=1,2,3$.
\end{lem}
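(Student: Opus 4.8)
The plan is to mirror the computations already used for Lemma \ref{4.1} and for the corresponding lemma on anti-invariant submanifolds in Section~4, since the present statement is simply the ``normal to the structure vector fields'' analogue; as the text signals, it is straightforward. For part $(i)$ I would test $\varphi_\alpha N$ against an arbitrary tangent vector. Given $N\in T_pM^\perp$ and $X\in T_pM$, the skew-adjointness relation in (\ref{8}) gives
\[
\overline{g}(\varphi_\alpha N,X)=-\overline{g}(N,\varphi_\alpha X).
\]
Since $M$ is invariant, $\varphi_\alpha X\in T_pM$, and as $N$ is normal the right-hand side vanishes. Hence $\varphi_\alpha N\perp T_pM$, that is $\varphi_\alpha N\in T_pM^\perp$, which proves $(i)$.

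For part $(ii)$ I would take $U\in\Gamma(\mathcal{D}^\perp)$ and show $\varphi_\alpha U\in\Gamma(\mathcal{D}^\perp)$. By $(i)$ we already know $\varphi_\alpha U\in\Gamma(TM^\perp)$, so it remains to verify that $\varphi_\alpha U$ is orthogonal to each structure vector field $\xi_\beta$, $\beta=1,2,3$. Using (\ref{8}) together with $\varphi_\alpha\xi_\alpha=0$ from (\ref{2}) yields $\overline{g}(\varphi_\alpha U,\xi_\alpha)=-\overline{g}(U,\varphi_\alpha\xi_\alpha)=0$. For $\beta\neq\alpha$, writing $(\alpha,\beta,\gamma)$ as an even permutation of $(1,2,3)$ and applying (\ref{4}) gives
\[
\overline{g}(\varphi_\alpha U,\xi_\beta)=-\overline{g}(U,\varphi_\alpha\xi_\beta)=-\tau_\beta\overline{g}(U,\xi_\gamma)=0,
\]
the last equality holding because $U\in\mathcal{D}^\perp$ is by definition orthogonal to $\xi_\gamma$. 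Thus $\varphi_\alpha U$ is both normal and orthogonal to $\xi$, hence $\varphi_\alpha U\in\mathcal{D}^\perp$, establishing $(ii)$.

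There is no genuine obstacle here, in agreement with the assertion that the result is straightforward. The only point deserving mild care is in $(ii)$: one must check orthogonality to \emph{all three} $\xi_\beta$, not merely to $\xi_\alpha$, and it is precisely the mixed-structure identity $\varphi_\alpha\xi_\beta=\tau_\beta\xi_\gamma$ from (\ref{4}) that routes the off-diagonal case back to the defining orthogonality property of $\mathcal{D}^\perp$. Since both arguments are identical in spirit to Lemma \ref{4.1} in the tangent case, I would present them compactly, invoking $(i)$ directly inside the proof of $(ii)$.
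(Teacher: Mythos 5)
Your proof is correct: part $(i)$ is exactly the computation the paper gives for Lemma \ref{4.1}$(i)$ (with invariance of $TM$ under $\varphi_\alpha$ supplying the vanishing), and part $(ii)$ correctly combines $(i)$ with the orthogonality checks against $\xi_\alpha$ via \eqref{2} and against $\xi_\beta$, $\beta\neq\alpha$, via \eqref{4}. The paper itself omits the proof of this lemma, calling it straightforward, and your argument is precisely the intended adaptation of Lemma \ref{4.1} to the normal case.
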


\begin{rem}
If $\overline{M}$ is mixed 3-cosymplectic, then  (\ref{11}) directly implies the integrability of $\xi$ on
$\overline{M}$.
\end{rem}

\begin{prop}\label{5.2}
Let $M$ be an invariant submanifold of a manifold endowed with a
metric mixed 3-structure
$(\overline{M},(\varphi_\alpha,\xi_\alpha,\eta_\alpha)_{\alpha=\overline{1,3}},\overline{g})$,
such that the structure vector fields $\xi_1,\xi_2,\xi_3$ are normal
to $M$. Then $M$ admits an almost para-hyperhermitian structure.
\end{prop}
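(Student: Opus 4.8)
The plan is to produce the three endomorphisms of $TM$ directly from the ambient structure tensors $\varphi_1,\varphi_2,\varphi_3$ and show they restrict to an almost para-hypercomplex structure on $M$. By the preceding Lemma, each $\varphi_\alpha$ preserves $T_pM^\perp$, and since $M$ is invariant each $\varphi_\alpha$ preserves $T_pM$; so the restrictions $J_\alpha := \varphi_\alpha|_{TM}$ are well-defined $(1,1)$-tensors on $M$. First I would compute $J_\alpha^2$ on a tangent vector $X$. From \eqref{1} we have $\varphi_\alpha^2 X = \tau_\alpha(-X + \eta_\alpha(X)\xi_\alpha)$, and because the $\xi_\alpha$ are \emph{normal} to $M$ while $X$ is tangent, \eqref{8} gives $\eta_\alpha(X)=\varepsilon_\alpha\overline{g}(X,\xi_\alpha)=0$. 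Hence $J_\alpha^2 X = -\tau_\alpha X$, so that $J_1^2=J_2^2=\mathrm{Id}$ (as $\tau_1=\tau_2=-1$) and $J_3^2=-\mathrm{Id}$ (as $\tau_3=1$): thus $J_1,J_2$ are almost product structures and $J_3$ is an almost complex structure on $M$.

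Next I would verify the para-hypercomplex identity $J_1J_2=-J_2J_1=J_3$. For tangent $X$, relation \eqref{6} with $(\alpha,\beta,\gamma)=(1,2,3)$ reads $\varphi_1\varphi_2 X - \tau_1\eta_2(X)\xi_1 = \tau_3\varphi_3 X$, and again $\eta_2(X)=0$ since $\xi_2\perp TM$, so $\varphi_1\varphi_2 X = \tau_3\varphi_3 X = \varphi_3 X$; the antisymmetric half of \eqref{6} similarly gives $\varphi_2\varphi_1 X = -\varphi_3 X$. Restricting to $M$ yields exactly $J_1J_2=-J_2J_1=J_3$. I would then check compatibility of the induced metric $g$: from \eqref{7}, for tangent $X,Y$ the terms $\eta_\alpha(X)\eta_\alpha(Y)$ vanish, leaving $g(J_\alpha X,J_\alpha Y)=\overline{g}(\varphi_\alpha X,\varphi_\alpha Y)=\tau_\alpha\, g(X,Y)$, which is precisely the adapted-metric condition $g(J_1\cdot,J_1\cdot)=g(J_2\cdot,J_2\cdot)=-g(J_3\cdot,J_3\cdot)=-g(\cdot,\cdot)$ after inserting the signs $\tau_1=\tau_2=-1$, $\tau_3=1$.

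The only genuinely delicate point is confirming that the $J_\alpha$ are honest tensor fields landing in $TM$ rather than having a normal leakage; this is handled entirely by invariance of $M$ together with the normality of the $\xi_\alpha$, which is what forces every $\eta_\alpha(X)$ to vanish on $TM$ and thereby strips the structure equations \eqref{1} and \eqref{6} of their $\eta_\alpha\otimes\xi_\alpha$ correction terms. I do not expect a serious obstacle: the statement asserts only an \emph{almost} para-hyperhermitian structure, so no integrability or parallelism is required, and the proof reduces to the three algebraic verifications above carried out pointwise on $T_pM$.
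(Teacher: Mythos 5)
Your proof is correct and follows essentially the same route as the paper: restrict the $\varphi_\alpha$ to $TM$, use the normality of the $\xi_\alpha$ to make every $\eta_\alpha$ vanish on tangent vectors, and then read off the almost para-hypercomplex identities from \eqref{1} and \eqref{6} and the metric compatibility from \eqref{7}. Nothing is missing.
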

\begin{proof}
For any $X\in\Gamma(TM)$, we obtain from (\ref{8}) that
\[
\eta_\alpha(X)=\varepsilon_\alpha\overline{g}(X,\xi_\alpha)=0.
\]

Then from (\ref{1}) it follows
\[
\varphi^2_\alpha X=-\tau_\alpha X,\ \alpha=1,2,3
\]
and if we denote by
\[J_\alpha={\varphi_\alpha}_{|M}, \alpha=1,2,3\]
from (\ref{7}) we obtain
\[
J_\alpha J_\beta=-J_\beta J_\alpha=\tau_\gamma J_\gamma,
\]
for any even permutation $(\alpha,\beta,\gamma)$ of $(1,2,3)$.

On the other hand, from (\ref{7}) we get
\[
g(\varphi_\alpha X, \varphi_\alpha Y)=\tau_\alpha g(X,Y),\ \forall
X,Y\in\Gamma(TM),\ \alpha=1,2,3.
\]

Therefore $(M,(J_\alpha)_{\alpha=1,2,3},g)$ is an almost
para-hyperhermitian manifold.
\end{proof}

\begin{cor}
Any invariant submanifold of a manifold endowed with a metric mixed
3-structure, normal to the structure vector fields, has the
dimension $4k$, $k\in\mathbb{N}$, and the induced metric has
signature $(2k,2k)$.
\end{cor}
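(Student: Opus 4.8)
The plan is to derive everything from Proposition \ref{5.2}, which furnishes on $M$ an almost para-hyperhermitian structure $(J_\alpha)_{\alpha=1,2,3}$ together with the compatible metric $g$. Recall from that proof that, on $TM$, one has $J_\alpha^2=-\tau_\alpha\,Id$ (so $J_1^2=J_2^2=Id$ and $J_3^2=-Id$), that $J_\alpha J_\beta=-J_\beta J_\alpha=\tau_\gamma J_\gamma$ for every even permutation $(\alpha,\beta,\gamma)$ of $(1,2,3)$, and that $g(J_\alpha X,J_\alpha Y)=\tau_\alpha g(X,Y)$. First I would record the consequence that each $J_\alpha$ is skew-adjoint: replacing $Y$ by $J_\alpha Y$ in the last identity and using $J_\alpha^2=-\tau_\alpha\,Id$ yields $g(J_\alpha X,Y)=-g(X,J_\alpha Y)$ for all $X,Y\in T_pM$.

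Next I would build a pseudo-orthonormal frame adapted to the structure, mirroring the frame exhibited on $\overline M$ in Section 2 but now omitting the vectors $\xi_\alpha$, since these are normal to $M$. Fix $p\in M$; as $g$ is non-degenerate on $T_pM$ there is a non-null vector $E$, which I normalise so that $g(E,E)=\epsilon\in\{\pm1\}$. I would then verify that the quadruple $\{E,J_1E,J_2E,J_3E\}$ is pseudo-orthonormal. The norms are immediate from $g(J_\alpha E,J_\alpha E)=\tau_\alpha\epsilon$, giving $\{\epsilon,-\epsilon,-\epsilon,\epsilon\}$, so this quadruple carries signature $(2,2)$ irrespective of the sign of $\epsilon$. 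For the orthogonality I would use skew-adjointness together with the products $J_1J_2=J_3$, $J_1J_3=J_2$, $J_2J_3=-J_1$: each relation $g(E,J_\alpha E)=0$ follows from $g(J_\alpha E,E)=-g(E,J_\alpha E)$, while the mixed products reduce to these, e.g. $g(J_1E,J_2E)=-g(E,J_1J_2E)=-g(E,J_3E)=0$, and similarly for the two remaining pairs.

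Let $W=\mathrm{Sp}\{E,J_1E,J_2E,J_3E\}$. This subspace is invariant under every $J_\alpha$ (by the product relations) and, being pseudo-orthonormal, non-degenerate. The crucial step is that its orthogonal complement $W^\perp$ in $T_pM$ is again $J_\alpha$-invariant: if $v\in W^\perp$ and $w\in W$, then $g(J_\alpha v,w)=-g(v,J_\alpha w)=0$ since $J_\alpha w\in W$. Because $W$ is non-degenerate, so is $W^\perp$, and hence $W^\perp$ contains a non-null vector whenever it is non-zero. I would then iterate the construction on $W^\perp$.

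By induction, $T_pM$ splits into an orthogonal direct sum of $k$ such quadruples, each of dimension $4$ and signature $(2,2)$. This forces $\dim M=4k$ with $k\in\mathbb N$ and induced signature $(2k,2k)$, as asserted. The only point demanding care is the inductive step --- ensuring that $W^\perp$ stays non-degenerate and $J_\alpha$-invariant so that a non-null vector is always available --- but this is exactly what skew-adjointness of the $J_\alpha$ and the non-degeneracy of $g$ on $M$ provide.
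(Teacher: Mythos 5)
Your argument is correct and follows exactly the route the paper intends: the corollary is drawn from Proposition \ref{5.2}, and the dimension and signature are then read off from an adapted pseudo-orthonormal frame $\{E,J_1E,J_2E,J_3E\}$ of signature $(2,2)$ per block, which is the same frame construction the paper invokes in Section 2 for the ambient manifold (there augmented by $\xi_1,\xi_2,\xi_3$). The paper leaves this linear-algebra step implicit, and your write-up supplies precisely the missing details (skew-adjointness, invariance and non-degeneracy of $W^\perp$, induction) without any gap.
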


\begin{prop}
Let $M$ be an invariant submanifold of a manifold endowed with a
metric mixed 3-structure
$(\overline{M},(\varphi_\alpha,\xi_\alpha,\eta_\alpha)_{\alpha=\overline{1,3}},\overline{g})$,
such that the structure vector fields $\xi_1,\xi_2,\xi_3$ are normal
to $M$. If $\overline{M}$ is mixed 3-cosymplectic, then $M$ is a
para-hyper-K\"{a}hler manifold, totally geodesically immersed in
$\overline{M}$.
\end{prop}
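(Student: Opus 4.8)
The plan is to build directly on Proposition~\ref{5.2}, which already endows $M$ with an almost para-hyperhermitian structure $(M,(J_\alpha)_{\alpha=1,2,3},g)$ where $J_\alpha=\varphi_\alpha|_M$. To upgrade this to a para-hyper-K\"{a}hler structure, I must show that each $J_\alpha$ is parallel with respect to the Levi-Civita connection $\nabla$ of the induced metric $g$, that is $\nabla J_\alpha=0$. The natural starting point is the Gauss-type identity for the endomorphism field, namely
\begin{equation}\label{plangauss}
(\overline{\nabla}_X\varphi_\alpha)Y=(\nabla_X\varphi_\alpha)Y+h(X,\varphi_\alpha Y)-\varphi_\alpha h(X,Y)+A_{\varphi_\alpha Y}X-(\text{normal terms}),
\end{equation}
though here one must be careful because, unlike in Section~5 above, the structure vector fields are now \emph{normal}, so $\varphi_\alpha Y$ is still tangent (as $M$ is invariant) but the decomposition of $\overline{\nabla}_X(\varphi_\alpha Y)$ must be tracked via both the Gauss and Weingarten formulas.

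First I would exploit that $\overline{M}$ is mixed 3-cosymplectic, so $\overline{\nabla}\varphi_\alpha=0$ by \eqref{9}. Feeding this into the expansion of $\overline{\nabla}_X(\varphi_\alpha Y)=\varphi_\alpha\overline{\nabla}_X Y$ and splitting both sides into tangential and normal parts using \eqref{13}, invariance of $M$ ($\varphi_\alpha(T_pM)\subset T_pM$), and part~(i) of the preceding Lemma ($\varphi_\alpha(T_pM^\perp)\subset T_pM^\perp$), I expect the tangential part to yield exactly
\begin{equation}\label{planpar}
(\nabla_X J_\alpha)Y=0,\qquad \forall X,Y\in\Gamma(TM),\ \alpha=1,2,3,
\end{equation}
while the normal part gives the compatibility $h(X,\varphi_\alpha Y)=\varphi_\alpha h(X,Y)$. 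Since $\varphi_\alpha$ preserves both $TM$ and $TM^\perp$, the cross terms separate cleanly, so \eqref{planpar} should drop out almost immediately. This establishes that $(M,(J_\alpha),g)$ is para-hyper-K\"{a}hler.

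For the totally geodesic claim, I would reuse the algebraic trick from the proof of Proposition~\ref{4.2}: starting from $h(X,\varphi_1 Y)=\varphi_1 h(X,Y)$ together with $\varphi_1=\tau_1\varphi_2\varphi_3$ from \eqref{6} and the symmetry $h(X,Y)=h(Y,X)$, one cycles through the three structures to derive $h(X,\varphi_1 Y)=-\varphi_1 h(X,Y)$, forcing $\varphi_1 h(X,Y)=0$ and hence $h\equiv0$. Here I must verify that this manipulation still goes through: in Section~5 the identity $\varphi_\alpha h(X,Y)$ made sense because $h$ took values in $TM^\perp$ on which $\varphi_\alpha$ acts; that is exactly the content of Lemma part~(i), so the computation transfers verbatim.

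The main obstacle I anticipate is bookkeeping rather than conceptual: I must confirm that in the normal-to-$\xi$ setting the commutation $\overline{\nabla}_X(\varphi_\alpha Y)=\varphi_\alpha\overline{\nabla}_X Y$ really does split without leftover terms involving the shape operator acting on the $\xi$-directions, and that the compatibility relation $h(X,\varphi_\alpha Y)=\varphi_\alpha h(X,Y)$ is genuinely available before invoking the cyclic argument. Once \eqref{planpar} and this compatibility are in hand, both conclusions follow, and the structure being para-hyper-K\"{a}hler is simply the conjunction of Proposition~\ref{5.2} with $\nabla J_\alpha=0$.
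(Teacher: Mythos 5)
Your proposal is correct and follows essentially the same route as the paper: invoke Proposition \ref{5.2}, split $0=(\overline{\nabla}_X\varphi_\alpha)Y$ via the Gauss formula into the tangential part $(\nabla_X J_\alpha)Y=0$ and the normal part $h(X,\varphi_\alpha Y)=\varphi_\alpha h(X,Y)$, and then run the anticommutation cycle of Proposition \ref{4.2} to conclude $h\equiv0$. The only point worth making explicit (which the paper also leaves implicit) is that the cycle applies \eqref{6} to $h(X,Y)$ and therefore needs $\eta_\alpha(h(X,Y))=0$; this holds here because \eqref{11} gives $A_{\xi_\alpha}=0$, so $h$ takes values in $\mathcal{D}^\perp$ and the computation indeed transfers verbatim.
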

\begin{proof}
From Proposition \ref{5.2} it follows that $M$ can be endowed with
an almost para-hypercomplex structure $H=(J_\alpha)_{\alpha=1,2,3}$,
which is para-hyperhermitian with respect to the induced metric $g$.
On the other hand, from (\ref{9}) and Gauss formula we obtain
\[
0=(\overline{\nabla}_X \varphi_\alpha)Y=(\nabla_X
\varphi_\alpha)Y+h(X,\varphi_\alpha Y)-\varphi_\alpha h(X,Y)
\]
for all $X,Y\in\Gamma(TM)$.

From the above identity, equating the normal and tangential
components, it follows that we have:
\begin{equation}\label{23}
h(X,\varphi_\alpha Y)=\varphi_\alpha h(X,Y)
\end{equation}
and
\begin{equation}\label{22}
(\nabla_X J_\alpha)Y=0,
\end{equation}
since $J_\alpha={\varphi_\alpha}_{|M}$.

From (\ref{22}) we deduce that $(M,H=(J_\alpha)_{\alpha=1,2,3},g)$
is a para-hyper-K\"{a}hler manifold and from (\ref{23}) we obtain
similarly as in the proof of Theorem \ref{4.2} that $M$ is totally
geodesic immersed in $\overline{M}$.
\end{proof}

\begin{cor}
The invariant submanifolds of $\mathbb{R}^{4n+3}_{2n+1}$ and
$\mathbb{R}^{4n+3}_{2n+2}$, normal to the structure vector fields,
are locally isometric with $\mathbb{R}^{4k}_{2k}$, where $0\leq
k\leq n$.
\end{cor}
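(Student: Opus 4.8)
The plan is to leverage the preceding Proposition together with the flatness of the two ambient spaces. First I would recall, from the list of examples, that $\mathbb{R}^{4n+3}_{2n+1}$ carries a positive and $\mathbb{R}^{4n+3}_{2n+2}$ a negative mixed $3$-cosymplectic structure, so both fall under the hypothesis of the preceding Proposition. Consequently any invariant submanifold $M$ normal to the structure vector fields is para-hyper-K\"ahler and, crucially, totally geodesically immersed; in particular its second fundamental form vanishes, $h\equiv0$.

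Next I would feed this into the Gauss equation. Writing $\overline{R}$ and $R$ for the curvature tensors of $\overline{M}$ and $M$, the Gauss equation reads
\[
R(X,Y,Z,W)=\overline{R}(X,Y,Z,W)+\overline{g}(h(X,W),h(Y,Z))-\overline{g}(h(X,Z),h(Y,W)).
\]
Because the ambient spaces $\mathbb{R}^{4n+3}_{2n+1}$ and $\mathbb{R}^{4n+3}_{2n+2}$ are semi-Euclidean, they are flat, so $\overline{R}\equiv0$; combined with $h\equiv0$ this forces $R\equiv0$, that is, $M$ is flat. By the earlier Corollary on invariant submanifolds normal to the structure vector fields, $\dim M=4k$ and the induced metric has signature $(2k,2k)$; comparing this with the ambient signature ($(2n+1,2n+2)$ or $(2n+2,2n+1)$) yields $2k\le 2n+1$ and hence $0\le k\le n$.

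Finally I would invoke the standard structure theory of flat semi-Riemannian manifolds: around every point a flat manifold admits a normal (pseudo-)coordinate chart that is a local isometry onto an open subset of the model space of the same dimension and signature. Since $M$ is flat of dimension $4k$ and signature $(2k,2k)$, it is therefore locally isometric with $\mathbb{R}^{4k}_{2k}$, as claimed. I do not expect a genuine obstacle here: the two nontrivial ingredients, namely the total geodesy of $M$ and the flatness of the ambient, are already in hand, and the remaining passage from \emph{flat} to \emph{locally isometric to the model} is classical.
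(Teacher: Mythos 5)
Your proposal is correct and follows exactly the route the paper intends (the corollary is stated without proof, as an immediate consequence of the preceding Proposition): total geodesy plus flatness of the semi-Euclidean ambient forces $R\equiv 0$ via the Gauss equation, and the dimension/signature count $(4k,(2k,2k))$ from the earlier Corollary identifies the local model as $\mathbb{R}^{4k}_{2k}$. No gaps.
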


\section*{Acknowledgements} The authors are
partially supported by CNCSIS - UEFISCSU, project PNII - IDEI code
8/2008, contract no. 525/2009.

Stere IANU\c{S} \\
        University of Bucharest,
        Faculty of Mathematics and Computer Science,\\
        Str. Academiei, Nr. 14, Bucharest 70109, Romania\\
        e-mail: ianus@gta.math.unibuc.ro\\

Liviu ORNEA\\
        University of Bucharest,
        Faculty of Mathematics and Computer Science,\\
        Str. Academiei, Nr. 14, Bucharest 70109, Romania\\
        and\\
        Institute of Mathematics ``Simion Stoilow" of the Romanian
        Academy,\\
        Calea Grivi\c{t}ei, Nr. 21, Bucharest 010702, Romania\\
        e-mail: lornea@gta.math.unibuc.ro, liviu.ornea@imar.ro\\

Gabriel Eduard V\^{I}LCU \\
         Petroleum-Gas University of Ploie\c sti,\\
         Department of Mathematics and Computer Science,\\
         Bulevardul Bucure\c sti, Nr. 39, Ploie\c sti 100680, Romania\\
         and\\
        University of Bucharest,
        Faculty of Mathematics and Computer Science,\\
        Research Center in Geometry, Topology and Algebra\\
        Str. Academiei, Nr. 14, Bucharest 70109, Romania\\
         e-mail: gvilcu@mail.upg-ploiesti.ro

\end{document}